\begin{document}

\begin{abstract}
    We prove that, for every polyhedral or $C^1$ norm on $\R^d$ and every set $E \subseteq \R^d$ of packing dimension $s$, the packing dimension of the distance set of $E$ with respect to that norm is at least $\tfrac{s}{d}$. One of the main tools is a nonlinear projection theorem extending a result of M.~J\"{a}rvenp\"{a}\"{a}. An explicit construction follows, demonstrating that these distance sets bounds are sharp for a large class of polyhedral norms. 
\end{abstract}

\title[Packing dimension of distance sets for $C^{1\!}$~and polyhedral norms]{On the packing dimension of distance sets \\ with respect to $\bm{C^{1\!}}$ and polyhedral norms}
\author[Altaf]{Iqra Altaf}
\address{Department of Mathematics \\ University of Chicago \\ 5734 S University Avenue, Room 108 \\ Chicago, IL, 60637-1505}
\email{iqra@uchicago.edu}
\author[Bushling]{Ryan Bushling}
\address{Department of Mathematics \\ University of Washington, Box 354350 \\ Seattle, WA 98195-4350}
\email{reb28@uw.edu}
\author[Wilson]{Bobby Wilson}
\address{Department of Mathematics \\ University of Washington, Box 354350 \\ Seattle, WA 98195-4350}
\email{blwilson@uw.edu}
\thanks{B. W. is supported by NSF CAREER Fellowship, DMS 2142064.}
\keywords{Nonlinear projections, Minkowski dimension, Lipschitz condition}
\subjclass[2020]{28A80}

\maketitle

%%%%%%%%%%%%%%%%%%%%%%%%%%%%%%%
%%% Section 1: Introduction %%%
%%%%%%%%%%%%%%%%%%%%%%%%%%%%%%%

\section{Introduction} \label{s:intro}

\noindent For a norm $\| \cdot \|_*$ on $\R^d$ and set $E \subseteq \R^d$, let
\begin{equation*}
    \Delta^*(E) := \big\{ \| y-x \|_* \in [0,\infty) \!: x,y \in E \big\}
\end{equation*}
be the \textit{distance set} of $E$ with respect to that norm. Given a fixed $x \in \R^d$, we also let
\begin{equation*}
    \Delta_x^*(E) := \big\{ \| y-x \|_* \in [0,\infty) \!: y \in E \big\}
\end{equation*}
be the \textit{pinned distance set} of $E$ at $x$. The classical \textit{Falconer distance conjecture} \cite{falconer1985hausdorff} posits that, for $E$ (say, Borel),
\begin{equation*}
    \hdim E > \frac{d}{2} \qquad \Longrightarrow \qquad \hdim \Delta(E) = 1,
\end{equation*}
where $\hdim$ denotes Hausdorff dimension and $\Delta$ the distance set operator for the Euclidean norm. More quantitatively, one frequently seeks lower bounds on $\hdim \Delta(E)$ or $\hdim \Delta_x(E)$ in terms of $\hdim E$. For recent progress on the distance set conjecture, see \cite{du2023new,fiedler2023dimension,fiedler2024pinned}. In particular, \cite{du2023new} contains a clear and thorough history of the problem.

This paper may be seen as a follow-up to \cite{altaf2023distance}, in which the authors treat the Falconer distance problem for general norms, emphasizing in particular the case of polyhedral norms. The main result therein is the following.

\begin{nthm}[\cite{altaf2023distance} Theorems 1.1 \& 1.4]
    Let $\| \cdot \|_*$ be a norm on $\R^d$ and let $E \subseteq \R^d$. Then
    \begin{equation} \label{eq:main-hausdorff}
        \hdim \Delta_x^*(E) \geq \hdim E - (d-1)
    \end{equation}
    for all $x \in \R^d$. This bound is sharp for polyhedral norms in the sense that, if $\| \cdot \|_P$ is a polyhedral norm on $\R^d$, then for any $s \in [d-1,d]$, there exists a compact set $E \subset \R^d$ with $\hdim E = s$ such that
    \begin{equation*}
        \hdim \Delta^P(E) = s - (d-1).
    \end{equation*}
\end{nthm}

\subsection{Main results} \label{ss:results} Distance sets with respect to polyhedral norms have also been considered in \cite{konyagin2006distance,falconer2004dimensions,konyagin2004separated,bishop2021falconer}, and in addition \cite{bishop2021falconer} presents strictly convex $C^1$ norms with pathological distance sets. The works \cite{orponen2017distance,keleti2019new,fiedler2023dimension,fiedler2024pinned} address problems concerning the \textit{packing} dimension $\pdim$ of distance sets, and here we consider the packing dimension of distance sets with respect to polyhedral and $C^1$ norms. Our main theorem is the following.

\begin{thm} \label{thm:main}
    Let $\| \cdot \|_*$ be either a $C^1$ or polyhedral norm on $\R^d$ and let $E \subseteq \R^d$. For every $\eps>0$, there exists $ x \in E$ such that
    \begin{equation} \label{eq:main-packing}
        \pdim \Delta_{x}^*(E) \geq \frac{1}{d} \pdim E - \eps.
    \end{equation}
    In particular,
    \begin{equation*}
        \pdim \Delta^*(E) \geq \frac{1}{d} \pdim E.
    \end{equation*}
\end{thm}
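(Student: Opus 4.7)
The plan is to reduce Theorem~\ref{thm:main} to a J\"{a}rvenp\"{a}\"{a}-type projection theorem for the nonlinear maps $f_x \colon y \mapsto \|y - x\|_*$. Setting $s = \pdim E$ and fixing $\eps > 0$, the first step is to pass from packing dimension to upper Minkowski dimension via the Tricot-type characterization $\pdim E = \inf \{ \sup_i \overline{\dim}_M E_i : E \subseteq \bigcup_i E_i \}$: by a standard porosity-style argument, one extracts a bounded Borel subset $E_0 \subseteq E$ satisfying the uniform lower density $\overline{\dim}_M(E_0 \cap U) \geq s - \eps$ for every open set $U$ meeting $E_0$. Since packing dimension is monotone under inclusion and $f_x(E_0) \subseteq \Delta_x^*(E)$, it suffices to exhibit some $x \in E_0$ with $\pdim f_x(E_0) \geq s/d - \eps$.

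The second step exploits the local linearity of $f_x$. For a polyhedral norm, $\R^d \setminus \{x\}$ decomposes into finitely many convex cones $x + C_1, \ldots, x + C_m$ on which $f_x$ coincides with a linear functional $y \mapsto \langle \theta_i, y - x\rangle$, with $\theta_i$ ranging over the finite set of outward unit duals to the faces of the unit ball. For a $C^1$ norm, $f_x$ is continuously differentiable away from $x$ with non-vanishing gradient, so on any sufficiently small ball around a point $y_0 \neq x$ it is uniformly close to the affine projection in the direction $\nabla f_x(y_0)$. In either regime, on a suitable region $f_x$ reduces to (or $C^1$-approximates) a linear projection $\pi_\theta$ in a direction $\theta = \theta(x,y)$.

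The third step invokes the promised nonlinear analogue of J\"{a}rvenp\"{a}\"{a}'s theorem. In the linear case, J\"{a}rvenp\"{a}\"{a} showed that for bounded Borel $F \subseteq \R^d$ and $\gamma_{d, k}$-almost every $V \in G(d,k)$, $\pdim \pi_V(F) \geq (k/d)\, \overline{\dim}_M(F)$. The nonlinear extension we need asserts the analogous bound $\pdim f_x(E_0) \geq \overline{\dim}_M(E_0)/d - \eps$ for some $x$ in a generic subset of $E_0$. Its proof should parallel J\"{a}rvenp\"{a}\"{a}'s potential-theoretic argument, with Grassmannian averaging replaced by an average over pins $x$ against a suitable measure supported on $E_0$, and with the linearization of Step~2 converting pointwise estimates for $\pi_\theta$ into pointwise estimates for $f_x$. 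Assembling Steps 1--3 then gives $\pdim f_x(E_0) \geq s/d - O(\eps)$, which yields the theorem upon letting $\eps \to 0$.

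The main obstacle lies in executing the nonlinear projection theorem. For $C^1$ norms, this requires quantitative estimates on the modulus of continuity of $\nabla \|\cdot\|_*$, uniform down to arbitrarily small scales, so that the affine approximation does not destroy the Minkowski dimension lower bound. The polyhedral case is more delicate because only finitely many directions $\theta_i$ are available, so one cannot directly invoke ``$\gamma_{d,1}$-almost every direction''; the averaging must instead be over the choice of pin $x \in E_0$, and one argues by pigeonhole that for some $x$ and some face index $i$, the piece $E_0 \cap (x + C_i)$ is sufficiently large --- in both upper Minkowski dimension and mass with respect to the pinning measure --- for the linear projection $\pi_{\theta_i}$ to produce the required dimension bound.
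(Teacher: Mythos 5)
Your high-level strategy (reduce to a nonlinear J\"{a}rvenp\"{a}\"{a}-type projection theorem and exploit the local linearity of $y \mapsto \|y-x\|_*$) is the same as the paper's, but the step you defer to Step~3 is not a technical detail --- it is the entire content of the proof, and as you have formulated it, it cannot work. A single pin $x$ gives a single map $f_x \colon \R^d \to \R$, and no statement of the form ``$\pdim f_x(E_0) \geq \overline{\dim}_M(E_0)/d$ for some (or generic) $x$'' can be proved by linearizing $f_x$ and citing J\"{a}rvenp\"{a}\"{a}: her theorem says that among $d$ \emph{independent} coordinate directions at least one projection is good, and a single linear functional can annihilate a set of dimension $d-1$. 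What is actually needed --- and what the paper constructs --- is a family of $k$ pins $z_1,\dots,z_k$ such that the \emph{joint} map $\mathbf{p} = (p_{z_1},\dots,p_{z_k})$ does not collapse covering numbers (``weak transversality'': every fiber $\mathbf{p}^{-1}(B(\xi,\delta))\cap F$ is covered by $O_\eps(\delta^{-\eps})$ balls of radius $\delta$, Proposition~\ref{prop:c1weaktrans}); then Proposition~\ref{prop:nonlinear-jarvenpaa} is proved by a covering argument giving $\ubdim \mathbf{p}(F) \geq \ubdim F$ followed by the linear coordinate-projection theorem applied to the image $\mathbf{p}(F)\subset\R^k$, yielding that \emph{one} of the $k$ pinned distance sets is large. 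There is no potential-theoretic averaging over pins anywhere.

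The second, equally essential, missing ingredient is how one finds such a transversal family on a subset of nearly full packing dimension when the norm is degenerate. Your pigeonhole sketch for the polyhedral case (``some $x$ and some face index $i$ see a large piece of $E_0$'') again produces only one direction and stalls for the same reason. The paper's resolution is a rank-stabilization argument (Lemma~\ref{lem contra1}--Proposition~\ref{prop penultprop} in the polyhedral case, Lemma~\ref{lem contra}--Corollary~\ref{cor twocond} in the $C^1$ case): let $k$ be the largest integer such that some $k$ pins have gradients spanning a $k$-plane on a subset of packing dimension $\geq s-\alpha$; maximality forces \emph{all} gradients $\nabla p_z(x)$, $z\in G$, $x\in F$, into that fixed $k$-plane $V$, which in the polyhedral case gives $\|x-y\|_* = \|P_V(x-y)\|_*$ on $F$ and in the $C^1$ case confines $F$ to a cone $X(x,V,\eta)$ about $V$, so the problem genuinely reduces to $\R^k$ and the bound improves to $(s-\alpha)/k \geq (s-\alpha)/d$. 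Without this dichotomy your argument has no mechanism for handling the case where the available normal directions fail to span $\R^d$ on every large subset, which is exactly the enemy for polyhedral and flat $C^1$ norms.
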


In both the $C^1$ and the polyhedral case, the normal vector field to the surface the unit ball is regular enough to establish reasonable transversality conditions on the pinned distance functions $y \mapsto \|x-y\|_*$. Just as \eqref{eq:main-hausdorff} was sharp for every polyhedral norm in a strong way, \eqref{eq:main-packing} is also sharp for a large class of polyhedral norms.

\begin{thm} \label{thm:sharp}
    Let $\| \cdot \|_P$ be a polyhedral norm such that the unit normal vectors to the faces of the polyhedron are rationally dependent and let $s \in [0,d]$. There exists a compact set $E \subset \R^d$ with $\pdim E = s$ such that
    \begin{equation} \label{eq:sharp}
        \pdim \Delta^P(E) = \frac{s}{d}.
    \end{equation} 
\end{thm}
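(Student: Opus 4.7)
My plan is to build $E$ explicitly as a nested Cantor-type set whose arithmetic structure, forced by the rational dependence of the face normals, compresses distances into a low-dimensional arithmetic set. Let $\nu_1, \ldots, \nu_m$ denote the unit outward normals to the faces of the unit ball, so $\|x\|_P = \max_i \langle x, \nu_i \rangle$; the rational-dependence hypothesis lets me change coordinates so that each $\nu_i$ lies in $\frac{1}{q}\mathbb{Z}^d$ for some positive integer $q$. Consequently $\|x\|_P \in \frac{1}{qN}\mathbb{Z}$ whenever $x \in \frac{1}{N}\mathbb{Z}^d$, and any finite $S \subset \frac{1}{N}\mathbb{Z}^d$ contained in a ball of radius $R$ satisfies $|\Delta^P(S)| \leq CqNR$. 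This lattice bound will drive every distance-set estimate.

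Next I pick a rapidly growing integer sequence $n_1 \ll n_2 \ll \cdots$ (say $n_{k+1}=2^{n_k}$) and set $L_k := 2^{\lfloor (s/d)(n_k - n_{k-1}) \rfloor}$. I build a decreasing sequence $[0,1]^d = F_0 \supset F_1 \supset \cdots$ in which $F_k$ is a union of $2^{sn_k}$ closed dyadic cubes of side $2^{-n_k}$: inside each cube of $F_{k-1}$ I keep the $L_k^d$ sub-cubes arranged as a compact $L_k \times \cdots \times L_k$ ``corner cluster'' of total side $\sigma_k := L_k \cdot 2^{-n_k}$. The limit $E := \bigcap_k F_k$ is compact and carries the self-similar probability measure $\mu$ assigning mass $2^{-sn_k}$ to each level-$k$ cube. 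Clustering the $L_k^d$ cubes (as opposed to spreading them out as an arithmetic progression) is what prevents $\overline{\dim}_B E$ from blowing up at intermediate scales, while the sublattice structure inside each cluster still provides the arithmetic compression needed for the distance set.

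For the dimensions of $E$: the estimate $N(E, 2^{-n_k}) \leq 2^{sn_k}$, interpolated across $\delta \in [2^{-n_{k+1}}, 2^{-n_k}]$, gives $\overline{\dim}_B E \leq s$ and hence $\pdim E \leq s$. For the matching lower bound, $\mu(B(x, 2^{-n_k})) \asymp 2^{-sn_k}$ at every $x \in E$, so the upper local dimension $\limsup_{r \to 0}\log \mu(B(x,r))/\log r$ is at least $s$ on all of $E$; by the standard upper-local-dimension characterization of packing dimension, this yields $\pdim E \geq s$. For the distance set, with $D_k := N(\Delta^P(F_k), 2^{-n_k})$, the lattice bound together with the cluster structure gives the recursion $D_k \lesssim L_k (q + D_{k-1})$: any distance in $\Delta^P(F_k)$ is either an inter-cluster contribution (a parent-level distance perturbed by at most $2\sigma_k$, covered by $\lesssim D_{k-1} L_k$ intervals of length $2^{-n_k}$) or an intra-cluster contribution (values in $\frac{1}{q \cdot 2^{n_k}}\mathbb{Z} \cap [0, C\sigma_k]$, contributing $\lesssim qL_k$ values). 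Iterating yields $D_k \lesssim k \cdot 2^{(s/d) n_k}$, so $\overline{\dim}_B \Delta^P(E) \leq s/d$ and hence $\pdim \Delta^P(E) \leq s/d$. The matching lower bound $\pdim \Delta^P(E) \geq s/d$ is immediate from Theorem~\ref{thm:main} applied to our $E$ (which has $\pdim E = s$).

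The subtlest step will be the packing-dimension lower bound on $E$: with $n_{k+1}/n_k \to \infty$ the Hausdorff dimension of $E$ will typically collapse to $0$, so the mass-distribution principle is unavailable, and I am forced to work with the upper-local-dimension characterization of $\pdim$. Verifying that the compact-cluster construction really produces $\overline{\dim}_{\mathrm{loc}}(\mu, x) \geq s$ at \emph{every} $x \in E$, and choosing the growth of $n_k$ so that no intermediate scale corrupts either the estimate $\overline{\dim}_B E \leq s$ or the estimate $\overline{\dim}_B \Delta^P(E) \leq s/d$, is the main quantitative challenge of the construction.
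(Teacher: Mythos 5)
Your construction is, at bottom, the same one the paper uses: a Cantor set with full lattice blocks at a lacunary sequence of scales and a single ``corner'' cube in between, raised to the $d$-th power, with the rational normals forcing all distances into $\tfrac{1}{q}\Z$-valued arithmetic progressions at each scale, and with Theorem~\ref{thm:main} supplying the lower bound $\pdim \Delta^P(E) \geq s/d$. (The paper phrases it as a base-$q$ digit restriction: digits free on blocks $[m_k,M_k]$ of density $s/d$ and zero elsewhere, $E=F^d$; your dyadic corner clusters are the same object in different clothing.) Your treatment of $\pdim E \geq s$ via the upper local dimension of the natural measure is correct and is actually more careful than the paper, which asserts $\pdim F = s/d$ without comment.

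There is, however, a genuine gap in the inter-cluster step of your recursion $D_k \lesssim L_k(q+D_{k-1})$ with $D_k := N(\Delta^P(F_k),2^{-n_k})$. First, the covering arithmetic is wrong: $\Delta^P(F_{k-1})$ is covered by $D_{k-1}$ intervals of length $2^{-n_{k-1}}$, so its $2\sigma_k$-neighborhood requires about $D_{k-1}\cdot 2^{n_k-n_{k-1}}$ intervals of length $2^{-n_k}$, not $D_{k-1}L_k = D_{k-1}2^{(s/d)(n_k-n_{k-1})}$; the saving must come from the lattice structure at scale $2^{-n_k}$, not from perturbing a coarse cover. Second, and more structurally, $\|x-y\|_P=\max_j|\langle x-y,\nu_j\rangle|$ does not recurse well as a single quantity: writing $x-y=(c_{Q_x}-c_{Q_y})+(u-v)$, the index $j$ achieving the maximum can differ between the parent difference and the child difference, so knowing how many values $\|c_{Q_x}-c_{Q_y}\|_P$ takes does not control how many values $\|x-y\|_P$ takes. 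The fix is to run the recursion coordinate-by-coordinate on the scalar projection sets $\Pi_k^j:=\{\langle x-y,\nu_j\rangle : x,y \text{ level-}k \text{ corners}\}$, for which $\#\Pi_k^j \lesssim qL_k\cdot\#\Pi_{k-1}^j$ does hold, and then use $\Delta^P(F_k)\subseteq\bigcup_j|\Pi_k^j|+O(2^{-n_k})$ at the end; this yields $\#\Pi_k^j\lesssim (Cq)^k 2^{(s/d)n_k}$ and recovers your claimed bound since $(Cq)^k=2^{o(n_k)}$. (This is exactly what the paper's digit computation does in one shot, and it also handles the intermediate scales $\delta\in(2^{-n_{k+1}},2^{-n_k})$ that you flag but do not resolve: the projection sets, unlike the distance set, have transparent structure at every scale.)
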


A main tool in the proof of Theorem \ref{thm:main} is a nonlinear projection estimate extending a result of J\"{a}rvenp\"{a}\"{a} \cite{jarvenpaa1994upper}. Let $\mathbf{p} = (\mathbf{p}_i)_{i=1}^d$ be a family of functions $\mathbf{p}_i \!: E \to \R$ defined on a set $E \subseteq \R^d$ and let $\ubdim$ denote upper box dimension.
\begin{defn}[Weak transversality]
    We say that $\mathbf{p}$ is \textit{weakly transversal} if, for every bounded set $F \subseteq E$, its restriction $\mathbf{p}|_F$ is bounded and has fibers of upper box dimension $0$. That is:
    \begin{enumerate}[label={\normalfont \textbf{\roman*.}}, topsep=-3pt, noitemsep]
        \item $\mathbf{p}(F)$ is bounded; and
        \item for every $\delta \in (0,1)$, there exists a natural number $M(\delta) \lesssim_\eps \delta^{-\eps}$ for every $\eps > 0$ such that the following holds: for every $\xi \in \R^n$, there exist $m \leq M(\delta)$ many points $x_1,\dots,x_m \in \R^n$ such that
        \begin{equation} \label{eq:fiber-separation}
            \mathbf{p}^{-1}\big( B(\xi,\delta) \big) \cap F \subseteq \bigcup_{k=1}^m B(x_k,\delta).
        \end{equation}
    \end{enumerate}
\end{defn}

The first condition simply ensures that $\ubdim \mathbf{p}(F)$ is defined whenever $\ubdim F$ is. The second condition ensures that the fibers of the functions $\mathbf{p}_i$ intersect fairly transversely at most points; it is always satisfied, for example, when $\mathbf{p}$ is locally invertible with Lipschitz inverses. Either $\delta$ (or both) in \eqref{eq:fiber-separation} can be replaced with a number $\sim_\eps \delta$.

With this terminology, we have the following result.

\begin{prop} \label{prop:nonlinear-jarvenpaa}
    Let $\mathbf{p} = \big( \mathbf{p}_i \!: E \to \R \big)_{i=1}^d$ be a weakly transversal family, let $n \in \{1,\dots,d\}$, and for every string of indices $i_1,\dots,i_n$ let $\mathbf{p}_{i_1,\dots,i_n} := \big( \mathbf{p}_{i_j} \!: E \to \R \big)_{j=1}^n$. Then for every set $F \subseteq E$,
    \begin{equation} \label{eq:box-jarvenpaa}
        \frac{n}{d} \ubdim F \leq \max_{1 \leq i_1 < \cdots < i_n \leq d} \ubdim \mathbf{p}_{i_1,\dots,i_n}(F)
    \end{equation}
    provided $F$ is bounded, and
    \begin{equation} \label{eq:packing-jarvenpaa}
        \frac{n}{d} \pdim F \leq \max_{1 \leq i_1 < \cdots < i_n \leq d} \pdim \mathbf{p}_{i_1,\dots,i_n}(F).
    \end{equation}
\end{prop}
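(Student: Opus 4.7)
The plan is to combine the discrete Loomis-Whitney / Bollob\'{a}s-Thomason inequality on $\mathbf{p}(F) \subseteq \R^d$ with weak transversality to transfer the estimate back to $F$. I would first establish \eqref{eq:box-jarvenpaa} for bounded $F$, then upgrade to \eqref{eq:packing-jarvenpaa} via the $\sigma$-stable characterization $\pdim A = \inf\{\sup_k \ubdim A_k : A = \bigcup_k A_k\}$.

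For \eqref{eq:box-jarvenpaa}, fix $\eps > 0$ and $\delta > 0$, and write $N(\cdot, \delta)$ for the $\delta$-covering number. Covering $\mathbf{p}(F)$ by $N(\mathbf{p}(F), \delta)$ balls of radius $\delta$ and invoking weak transversality on each preimage produces a $\delta$-cover of $F$ with
\[ N(F, \delta) \,\leq\, M(\delta)\, N(\mathbf{p}(F), \delta) \,\lesssim_\eps\, \delta^{-\eps}\, N(\mathbf{p}(F), \delta). \]
Discretizing on the lattice $\delta \Z^d$, let $\mathcal{B}$ be the collection of $\delta$-cubes meeting $\mathbf{p}(F)$, so $|\mathcal{B}| \sim N(\mathbf{p}(F), \delta)$, and for any $n$-subset $S$ the coordinate projection $P_S$ satisfies $|P_S \mathcal{B}| \sim N(\mathbf{p}_S(F), \delta)$. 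Since the family of all $n$-subsets of $\{1, \ldots, d\}$ is a uniform $\binom{d-1}{n-1}$-cover, the Bollob\'{a}s-Thomason box inequality gives
\[ |\mathcal{B}|^{\binom{d-1}{n-1}} \,\leq\, \prod_{|S|=n} |P_S \mathcal{B}| \,\leq\, \max_{|S|=n} |P_S \mathcal{B}|^{\binom{d}{n}}, \]
and since $\binom{d-1}{n-1}/\binom{d}{n} = n/d$, combining the displays yields $N(F, \delta)^{n/d} \lesssim_\eps \delta^{-\eps n/d} \max_{|S|=n} N(\mathbf{p}_S(F), \delta)$. Taking logarithms, dividing by $\log(1/\delta)$, and sending $\delta \to 0$ and then $\eps \to 0$ proves \eqref{eq:box-jarvenpaa}.

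For \eqref{eq:packing-jarvenpaa}, I first reduce to $F$ bounded via the decomposition $F = \bigcup_R (F \cap \overline{B(0,R)})$. Given $\eps > 0$, choose for each $n$-subset $S$ a countable decomposition $\mathbf{p}_S(F) = \bigcup_k A_{S,k}$ with $\sup_k \ubdim A_{S,k} \leq \pdim \mathbf{p}_S(F) + \eps$, and form the common refinement
\[ F_{\mathbf{k}} \,:=\, F \cap \bigcap_{|S|=n} \mathbf{p}_S^{-1}(A_{S, k_S}), \qquad \mathbf{k} = (k_S)_{|S|=n}, \]
a countable partition of $F$. Each $F_{\mathbf{k}}$ is bounded (so it inherits weak transversality from $F$) and satisfies $\mathbf{p}_S(F_{\mathbf{k}}) \subseteq A_{S, k_S}$, so applying \eqref{eq:box-jarvenpaa} to each piece gives
\[ \tfrac{n}{d}\, \pdim F \,\leq\, \tfrac{n}{d}\, \sup_{\mathbf{k}}\, \ubdim F_{\mathbf{k}} \,\leq\, \sup_{\mathbf{k}}\, \max_{|S|=n} \ubdim \mathbf{p}_S(F_{\mathbf{k}}) \,\leq\, \max_{|S|=n} \pdim \mathbf{p}_S(F) + \eps, \]
and $\eps \to 0$ concludes. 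The main technical point is ensuring that the $\delta^{-\eps}$ loss from weak transversality is absorbed in the limiting procedure; the Bollob\'{a}s-Thomason step and the packing-dimension refinement argument are otherwise routine once set up correctly.
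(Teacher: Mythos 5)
Your argument is correct, and while its overall skeleton matches the paper's (transfer covering numbers from $\mathbf{p}(F)$ back to $F$ via weak transversality at a cost of $\delta^{-\eps}$, apply a coordinate-projection inequality to $\mathbf{p}(F) \subseteq \R^d$, then upgrade from box to packing dimension by countable decompositions), the two key steps are carried out by genuinely different means. For the projection step, the paper quotes J\"{a}rvenp\"{a}\"{a}'s theorem applied to $\mathbf{p}(F)$ with the standard basis, whereas you reprove exactly the coordinate-projection case needed via the discrete Bollob\'{a}s--Thomason box inequality: since every index lies in $\binom{d-1}{n-1}$ of the $\binom{d}{n}$ many $n$-subsets and $\binom{d-1}{n-1}/\binom{d}{n}=n/d$, you get the quantitative estimate $N(\mathbf{p}(F),\delta)^{n/d} \lesssim \max_{|S|=n} N(\mathbf{p}_S(F),\delta)$ at every scale, which is self-contained and slightly stronger than what the limsup argument requires (your interchange of $\limsup_{\delta\to 0}$ with the finite max over $S$ is legitimate). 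For the packing step, the paper pulls back near-optimal covers of each $\mathbf{p}_S(F)$, intersects the resulting (possibly unbounded) sets with unit cubes to restore boundedness, and then pigeonholes over the finitely many strings because its chosen string depends on $\eps$; you instead reduce to bounded $F$ first via countable stability of $\pdim$, so each refinement piece $F_{\mathbf{k}}$ is automatically a bounded subset of $E$ (hence weak transversality applies), and no pigeonhole is needed since your final bound is against the fixed quantity $\max_{|S|=n}\pdim \mathbf{p}_S(F)+\eps$. Both routes are sound; yours buys a more elementary, fully self-contained proof (modulo the standard Bollob\'{a}s--Thomason inequality), while the paper's is shorter given J\"{a}rvenp\"{a}\"{a}'s result and reflects the authors' stated choice to leverage it. Only cosmetic caveats: your refinement is a countable cover rather than a partition unless the $A_{S,k}$ are made disjoint, and one should note that $\mathbf{p}_S(F)$ is bounded (condition \textbf{i} of weak transversality) so that its upper box dimension and a cover by bounded sets of near-optimal dimension exist; neither affects the argument.
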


We only require the result for $n=1$, and it is possible to prove this directly by a simple covering argument. However, we have included the full statement in the interest of generality and elected a proof that leverages J\"{a}rvenp\"{a}\"{a}'s result on orthogonal projections.

If $\| \cdot \|_*$ is a strictly convex norm on $\R^2$, the ``projection maps" $\mathbf{p}_i(x) := \| x - x_i \|_*$ are weakly transversal for any two distinct $x_1,x_2 \in \R^2$, whereby the distance set bound of Theorem \ref{thm:main} follows immediately from Proposition \ref{prop:nonlinear-jarvenpaa}. In higher dimensions, one can establish a similar result for strictly convex norms for $d$ pins $x_1,\dots,x_d$ in general position. Things are not so simple for norms with more degenerate curvature. Establishing transversality for \textit{some} set of pins on a large portion of $E$ becomes the main hurdle in proving distance sets estimates for such norms.

\subsection{Outline} \label{ss:outline} After \S \ref{s:prelim} lays out the necessary background on the upper box and packing dimensions, \S \ref{s:nonlinear-jarvenpaa} presents the proof of Proposition \ref{prop:nonlinear-jarvenpaa} and \S \ref{s:polyhedral-proof} gives the proof of Theorem \ref{thm:main} for polyhedral norms. Next, the transversality conditions necessary to apply Proposition \ref{prop:nonlinear-jarvenpaa} for projections defined by $C^1$ norms are established in \S \ref{s:C1-transversality}, and in \S \ref{s:C1-proof} the proof of Theorem \ref{thm:main} for $C^1$ norms is completed. Finally, the sharp example of Theorem \ref{thm:sharp} is constructed in \S \ref{s:sharp}.

%%%%%%%%%%%%%%%%%%%%%%%%%%%%%%%%
%%% Section 2: Preliminaries %%%
%%%%%%%%%%%%%%%%%%%%%%%%%%%%%%%%

\section{Preliminaries} \label{s:prelim}

\noindent Let $E \subset \R^d$ be any bounded set and $N(E,\delta)$ its \textit{covering number} at scale $\delta$\textemdash the minimal number of $\delta$-balls required to cover $E$. The \textit{upper box dimension} of a bounded set $E \subset \R^d$ is defined by
\begin{equation*}
    \ubdim E := \limsup_{\delta \to 0} \frac{\log N(E,\delta)}{-\log \delta}.
\end{equation*}
Note that upper box dimension is finitely stable not but countably stable: that is, if $E = \bigcup_{i=1}^m E_i$, then $\ubdim E = \max_{1 \leq i \leq m} \ubdim E_i$ provided $m < \infty$, whereas it can happen that $\ubdim E > \sup_{1 \leq i < \infty} \ubdim E_i$. Packing dimension can be defined simply by ``forcing" upper box dimension to be countably stable: given any set $E \subseteq \R^d$ (not necessarily bounded), we let
\begin{equation} \label{eq:box-to-packing}
    \pdim E := \inf \left\{ \sup_{1 \leq i < \infty} \ubdim E_i \!: E \subseteq \bigcup_{i=1}^\infty E_i \right\},
\end{equation}
where the infimum is taken over all countable covers of $E$ by bounded sets. It follows immediately that $\ubdim E \geq \pdim E$ for every bounded set $E$, and it is also not difficult to show that $\pdim E \geq \hdim E$.

We sometimes use the standard notation $A \lesssim_\alpha B$ to indicate that $A \leq c_\alpha \+ B$ for some constant $c_\alpha$ depending only on the parameter (or collection of parameters) $\alpha$; these may be suppressed from the notation where the parameters are not especially important. The notation $A \sim_\alpha B$ means that both $A \lesssim_\alpha B$ and $B \lesssim_\alpha A$. Importantly, if for some $E_1, E_2 \subset \R^n$ the relation $N(E_1,\delta) \lesssim_\alpha N(E_2,\delta)$ holds for all $\delta \in (0,1)$, where $\alpha$ is independent of $\delta$, then $\ubdim E_1 \leq \ubdim E_2$.

Finally, we remark here that \S \ref{s:C1-transversality} makes heavy use of tools from convex analysis, but these will be introduced as needed. For more thorough references, see \cite{cioranescu2012geometry,clarke1990optimization}.

%%%%%%%%%%%%%%%%%%%%%%%%%%%%%%%%%%%%%%%%%%%
%%% Section 3: Proof of Proposition 1.3 %%%
%%%%%%%%%%%%%%%%%%%%%%%%%%%%%%%%%%%%%%%%%%%

\section{Proof of Proposition \ref{prop:nonlinear-jarvenpaa}} \label{s:nonlinear-jarvenpaa}

\noindent First we recall an orthogonal projection theorem of J\"{a}rvenp\"{a}\"{a} \cite{jarvenpaa1994upper}. Given linearly independent vectors $e_1,\dots,e_n \in \R^d$, let $V = V(e_1,\dots,e_n) := \Span \, \{e_1,\dots,e_n\}$ and let $P_V \!: \R^d \to \R^n$ be the orthogonal projection of $\R^d$ onto $V$.

\begin{thm}[J\"{a}rvenp\"{a}\"{a} \cite{jarvenpaa1994upper} Theorem 3.4 \& Corollary 3.5] \label{thm:orthogonal-jarvenpaa}
    If $(e_1,\ldots,e_d)$ is a basis for $\R^d$ and $1 \leq n \leq d$, then for all $E \subseteq \R^d$ there exist $1 \leq i_1 < \dots < i_n \leq d$ such that
    \begin{equation*}
        \frac{n}{d} \ubdim E \leq \ubdim P_{V(e_{i_1},\dots,e_{i_n})}(E)
    \end{equation*}
    provided $E$ is bounded, and
    \begin{equation*}
        \frac{n}{d} \pdim E \leq \pdim P_{V(e_{i_1},\dots,e_{i_n})}(E).
    \end{equation*}
\end{thm}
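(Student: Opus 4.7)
The plan is to first prove the upper box dimension inequality for bounded $E$ via a Loomis-Whitney/Shearer entropy argument, then derive the packing dimension version from the infimum characterization \eqref{eq:box-to-packing}.

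For the box dimension bound when $(e_i)$ is orthonormal, the orthogonal projection $P_{V(e_{i_1},\ldots,e_{i_n})}$ coincides with the coordinate projection $\pi_I$ onto $\Span\{e_i : i \in I\}$ for $I = \{i_1,\ldots,i_n\}$. I would take a maximal $\delta$-separated subset $E_\delta \subseteq E$, with $|E_\delta|$ comparable to $N(E,\delta)$, snap it to a $\delta$-grid, and apply Shearer's entropy inequality to the uniform distribution on this set with the family of all $\binom{d}{n}$ size-$n$ coordinate subsets---each index lying in exactly $\binom{d-1}{n-1}$ of them. This produces
\[
    N(E,\delta)^{\binom{d-1}{n-1}} \lesssim \prod_{|I|=n} N(\pi_I(E), \delta) \leq \bigl(\max_I N(\pi_I(E), \delta)\bigr)^{\binom{d}{n}}.
\]
Taking $\log/(-\log\delta)$, passing to $\limsup$, and using pigeonhole to extract an index $I^*$ that realizes the maximum on a subsequence $\delta_k \to 0$, I obtain $\tfrac{n}{d}\ubdim E \leq \ubdim \pi_{I^*}(E)$ (using $\binom{d}{n}/\binom{d-1}{n-1}=d/n$). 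For a general basis, I would first apply the bi-Lipschitz isomorphism $A$ with $A\mathbf{e}_i = e_i$ to reduce to the orthonormal case on $E' = A^{-1}(E)$; transferring the coordinate-projection bound on $E'$ back to an orthogonal-projection bound on $E$ is the key subtlety, since in the original coordinates the orthogonal projection onto $V_I$ differs from the $(e_i)$-coordinate projection by a bounded linear correction.

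For the packing dimension version, I would argue by contradiction: suppose $\pdim P_{V_I}(E) < s$ for every $I$, and for each $I$ choose a countable cover $P_{V_I}(E) \subseteq \bigcup_j F_j^I$ by bounded sets with $\sup_j \ubdim F_j^I < s$. The countable family of intersections $G_{(j_I)} := \bigcap_I P_{V_I}^{-1}(F_{j_I}^I)$ then covers $E$, and each such $G_{(j_I)}$ is bounded because $\bigcap_I V_I^\perp = \{0\}$---equivalently $\sum_I V_I = \R^d$, which holds since each basis vector $e_i$ lies in some $V_I$. Applying the already-established box-dimension inequality to the bounded set $G_{(j_I)}$ gives
\[
    \ubdim G_{(j_I)} \leq \tfrac{d}{n} \max_I \ubdim P_{V_I}(G_{(j_I)}) \leq \tfrac{d}{n} \max_I \ubdim F_{j_I}^I < \tfrac{d}{n} s,
\]
and \eqref{eq:box-to-packing} then yields $\pdim E \leq \tfrac{d}{n} s$, so $s \geq \tfrac{n}{d} \pdim E$, contradicting the initial assumption.

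The main obstacle is the general-basis case in the box-dimension step, since Shearer's inequality applies naturally to coordinate projections in a product structure, but the orthogonal projection onto a non-axis-aligned $V_I$ is not a coordinate projection. One route is to verify a Brascamp-Lieb datum for the configuration $\{V_I\}_{|I|=n}$ with weights $p_I$ satisfying $\sum_I p_I P_{V_I} = \mathrm{Id}$ (when such weights exist) and extract the covering estimate from the resulting geometric inequality; an alternative is to apply Shearer directly in the $(e_i)$-coordinate system to bound the oblique projections and then transfer this bound to the orthogonal projections $P_{V_I}$ using the transversality of the complements $V_I^\perp$.
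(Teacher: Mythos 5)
The paper does not actually prove this statement---it is quoted from J\"{a}rvenp\"{a}\"{a} and used as a black box (the only added remark being that the case $n=d$ is trivial)---so the comparison here is with the correctness of your argument itself. Two of your three steps are sound. The orthonormal case of the box-dimension bound via Shearer's inequality applied to a grid-snapped maximal separated set is correct, including the pigeonhole extraction of a single index set from the $\limsup$. Your derivation of the packing statement from the box statement (intersecting preimages of near-optimal covers of the projections, noting each intersection is bounded because $\bigcap_I V_I^{\perp}=\{0\}$, and applying the box bound piecewise) is also correct, and is in fact the same localization mechanism the paper uses to prove Proposition \ref{prop:nonlinear-jarvenpaa}, except that you get boundedness for free from the projections rather than by cutting with unit cubes.

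The genuine gap is the general-basis case, which you flag but do not close, and neither of your two suggested routes works as described. Conjugating by $A$ with $A\mathbf{e}_i=e_i$ controls the \emph{oblique} coordinate projections $\pi_I^{e}$ onto $V_I$ along $\Span\{e_j: j\notin I\}$; these do not differ from the orthogonal projections $P_{V_I}$ by a ``bounded linear correction,'' because the two maps have different kernels, and their images of the same set can have different dimensions in either direction (in $\R^2$ with $e_1=(1,0)$, $e_2=(1,\varepsilon)$: a segment of $\Span\{e_2\}$ has oblique projection onto $\Span\{e_1\}$ equal to a point but orthogonal projection a segment, and a segment of the $y$-axis behaves oppositely). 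The Brascamp--Lieb route also fails in general: nonnegative weights with $\sum_I p_I P_{V_I}=\mathrm{Id}$ need not exist, already for two nearly parallel lines in $\R^2$, where the trace condition forces a negative weight. The repair is cheap: pass to the dual basis $(e_1^*,\dots,e_d^*)$, $\langle e_i^*,e_j\rangle=\delta_{ij}$. Then $\ker P_{V_I}=V_I^{\perp}=\Span\{e_j^*: j\notin I\}$, which is exactly the kernel of the coordinate projection $\pi_I^{e^*}$ in dual-basis coordinates; two linear surjections with equal kernels have linearly isomorphic, hence bi-Lipschitz equivalent, images, so $\ubdim P_{V_I}(E)=\ubdim \pi_I^{e^*}(E)$ and likewise for packing dimension. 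Thus you should apply your Shearer argument after the (globally bi-Lipschitz) change of variables sending $(e_i^*)$ to an orthonormal basis, not the one sending the standard basis to $(e_i)$; with that replacement both halves of your proof go through.
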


The statement in J\"{a}rvenp\"{a}\"{a} \cite{jarvenpaa1994upper} is for $1 \leq n \leq d-1$, but it also holds trivially for $n=d$ as stated above.

\begin{proof}[Proof of Proposition \ref{prop:nonlinear-jarvenpaa}] We first consider the case \eqref{eq:box-jarvenpaa} of upper box dimension. Let $F \subseteq E$ be a bounded set and $1 \leq n \leq d$. By Theorem \ref{thm:orthogonal-jarvenpaa}, there exist indices $1 \leq i_1 < \cdots < i_n \leq d$ such that
\begin{equation*}
    \frac{n}{d} \ubdim \mathbf{p}(F) \leq \ubdim P_{V(e_{i_1},\dots,e_{i_n})}(\mathbf{p}(F)),
\end{equation*}
where $(e_1,\dots,e_d)$ denotes the standard basis of $\R^d$. Observe that $P_{V(e_{i_1},\dots,e_{i_n})}(\mathbf{p}(F)) = \mathbf{p}_{i_1,\dots,i_n}(F)$, so it suffices to show that $\ubdim \mathbf{p}(F) \geq \ubdim F$. Let $\eps > 0$ and $\delta \in (0,1)$, and let $\{ B_i \}_{i=1}^N$ be a cover of $\mathbf{p}(F)$ by $N := N(\mathbf{p}(F),\delta)$-many $\delta$-balls. By the definition of weak transversality, for each $i$ there exists a collection $\{ B_{i,j} \}_{j=1}^{N_i}$ of $\delta$-balls such that
\begin{equation*}
    \mathbf{p}^{-1}(B_i) \cap F \subseteq \bigcup_{j=1}^{N_i} B_{i,j},
\end{equation*}
where $N_i \lesssim \delta^{-\eps}$ for every $\eps > 0$. Hence, $\{ B_{i,j} \!: 1 \leq i \leq N, \, 1 \leq j \leq N_i \}$ is a cover of $F$ by
\begin{equation*}
    \sum_{i=1}^N N_i \lesssim N\delta^{-\eps}
\end{equation*}
balls of radius $\delta$. It follows that
\begin{align*}
    \ubdim F &= \limsup_{\delta \to 0} \frac{\log N(F,\delta)}{-\log \delta} \leq \limsup_{\delta \to 0} \frac{\log(N\delta^{-\eps})}{-\log \delta} \\
    &= \limsup_{\delta \to 0} \frac{\log N(\mathbf{p}(F),\delta)}{-\log \delta} + \lim_{\delta \to 0} \frac{\log \delta^{-\eps}}{-\log \delta} \\
    &= \ubdim \mathbf{p}(F) + \eps.
\end{align*}
Since $\eps > 0$ was arbitrary, the result follows and \eqref{eq:box-jarvenpaa} holds.

We now prove the packing dimension case \eqref{eq:packing-jarvenpaa}. Let $\eps > 0$ and, for each string of indices $1 \leq i_1 < \cdots < i_n \leq d$, let $\big\{ F_{i_1,\dots,i_n}^{(j)} \big\}_{j=1}^\infty$ be a cover of $\mathbf{p}_{i_1,\dots,i_n}(F)$ by bounded sets such that
\begin{equation*}
    \sup_{1 \leq j < \infty} \ubdim F_{i_1,\dots,i_n}^{(j)} < \pdim \mathbf{p}_{i_1,\dots,i_n}(F) + \frac{\eps}{2}.
\end{equation*}
The collection of all sets of the form
\begin{equation} \label{eq:F-cover}
    \bigcap_{1 \leq i_1 < \cdots < i_n \leq d} \mathbf{p}_{i_1,\dots,i_n}^{-1}\!\Big( F_{i_1,\dots,i_n}^{(j_{i_1,\dots,i_n})} \Big), \quad 1 \leq j_{i_1,\dots,i_n} < \infty
\end{equation}
is a cover of $F$, but not necessarily by bounded sets. Therefore, we refine this to a new cover $\big\{ G^{(k)} \big\}_{k=1}^\infty$ consisting of all sets formed by intersecting the elements \eqref{eq:F-cover} with unit cubes $[n_1,n_1+1) \times \cdots \times [n_d,n_d+1)$, $n_j \in \Z$.

Taking an index $k$ such that $\ubdim G^{(k)} > \pdim F - \tfrac{d}{2n} \eps$, we have by \eqref{eq:box-jarvenpaa} a string $1 \leq i_1' < \cdots < i_n' \leq d$ such that
\begin{equation} 
\begin{aligned}
    \frac{n}{d} \pdim F - \frac{\eps}{2} &< \frac{n}{d} \ubdim G^{(k)} \leq \ubdim \mathbf{p}_{i_1',\dots,i_n'}\big( G^{(k)} \big)
\end{aligned}
\end{equation}

We note that for every $k$, 
\begin{align*}
    G^{(k)}  \subset \bigcap_{1 \leq i_1 < \cdots < i_n \leq d} \mathbf{p}_{i_1,\dots,i_n}^{-1}\!\Big( F_{i_1,\dots,i_n}^{(j_{i_1,\dots,i_n})} \Big)
\end{align*} for some collection of $j_{i_1,\dots,i_n}$. Thus for our given string, $G^{(k)} \subset \mathbf{p}_{i_1',\dots,i_n'}^{-1}(F_{i_1',\dots,i_n'}^{(j)})$ for some $j$. Therefore, $\mathbf{p}_{i_1',\dots,i_n'}\big( G^{(k)} \big) \subseteq F_{i_1,\dots,i_n}^{(j)}$, which implies
\begin{equation} \label{eq:packing-inequality}
\begin{aligned}
    \frac{n}{d} \pdim F - \frac{\eps}{2} &< \ubdim \mathbf{p}_{i_1',\dots,i_n'}\big( G^{(k)} \big) \\
    &\leq \sup_{1 \leq j < \infty} \ubdim F_{i_1',\dots,i_n'}^{(j)} < \pdim \mathbf{p}_{i_1',\dots,i_n'}(F) + \frac{\eps}{2},
\end{aligned}
\end{equation}
This string $i_1',\dots,i_n'$ depends on $\eps$, but since there are only $\binom{d}{n}$ such strings, there is at least one string for which \eqref{eq:packing-inequality} holds for arbitrarily small $\eps > 0$; hence, this concludes the proof.
\end{proof}

%%%%%%%%%%%%%%%%%%%%%%%%%%%%%%%%%%%%%%%%%%%
%%% Section 4: Proof of Polyhedral Case %%%
%%%%%%%%%%%%%%%%%%%%%%%%%%%%%%%%%%%%%%%%%%%

\section{Proof of Theorem \ref{thm:main} -- The polyhedral case} \label{s:polyhedral-proof}

\noindent Let $\|\cdot\|_*$ be a norm defined by a collection $\{v_i\}_{i=1}^M$ of vectors so that 
\begin{align*}
    \|x\|_* := \max \, \{ |\langle x, v_i \rangle| \}_{i=1}^M.
\end{align*}
We note that we must have $M \geq d$. For each $x \in \R^d$ and $v \in \R^d$, we define the cone
\begin{align*}
    C(x,v) := \left \{ y \in \R^d ~:~ \|x-y\|_* = |\langle x-y, v \rangle| \right\}.
\end{align*}
Obviously, for each $x \in \R^d$,
\begin{equation*}
    \bigcup_{j=1}^M C(x, v_i) = \R^d.
\end{equation*}
In addition, let $\{z_j\}_{j=1}^{\infty} \subset \R^d$ be a countable set, so that
\begin{align*}
    \bigcap_{j=1}^{\infty} \bigcup_{i=1}^M C(z_j, v_i) = \R^d= \bigcup_{\substack{\{z_{j_1},\dots, z_{j_M}\} \\ \{v_{i_1}, \dots, v_{i_M}\}}} \bigcap_{\ell=1}^{M}  C(z_{j_{\ell}},  v_{i_{\ell}})
\end{align*}
where in the union we allow $j_s = j_t$ and $i_s = i_t$ for $s \neq t$. For $\alpha>0$ and a countable set $G \subset \R^d$, define
\begin{align*}
    k(E, G, \alpha) &:= \max_{\substack{\{z_{j_1}, \dots, z_{j_M}\} \subset G \\ \{v_{i_1}, \dots, v_{i_M}\}}} \bigg\{ \dim(\Span(\{v_{i_1},\dots, v_{i_M}\}))~:~ \\ 
    &\hspace*{4cm} \pdim\!\Big( \textstyle E \cap \bigcap_{\ell=1}^{M}  C(z_{j_{\ell}},  v_{i_{\ell}}) \Big) \geq \pdim E -\alpha \bigg\}.
\end{align*}
This is the largest number of cones based at points of $G$ that ``see" a large (relative to $\alpha$) portion of $E$ through independent faces of the norm polyhedra $\{ y \in \R^d \!: \| y-z_j\|_* = 1 \}$.

\begin{lem}\label{lem contra1}
    Let $E \subseteq \R^d$, and $\alpha>0$, and let $G \subset \R^d$ be countable. Then there exist $k\leq k(E,G, \alpha)$ and $F \subseteq E$ with $\pdim E = \pdim F$ such that the following hold:
    \begin{enumerate}[label={\normalfont\textbf{\arabic*.}},topsep=0pt,itemsep=0pt]
        \item If $F \cap \bigcap_{\ell=1}^{M}  C(z_{\ell}, v_{i_{\ell}}) \neq \varnothing$ for any $\{z_1, \dots, z_M\} \subset G$, then 
    \begin{align}\label{eq maximal}
        \dim(\Span\{v_{i_1},\dots, v_{i_M}\})\leq k.
    \end{align}
    \item There exist $\{z_1, z_2, \dots, z_k\} \subset G$ and $\{v_{i_1}, v_{i_2}, \dots, v_{i_k}\} \subseteq \{v_1, \dots, v_M\}$ such that 
    \begin{align*}
        \dim(\Span\{v_{i_1},\dots, v_{i_k}\}) = k
    \end{align*}
    and
    \begin{align*}
    \pdim\left( F \cap \bigcap_{\ell=1}^{k}  C(z_{j_{\ell}}, v_{i_{\ell}}) \right) \geq \pdim E - \alpha.
    \end{align*}
    \end{enumerate}
\end{lem}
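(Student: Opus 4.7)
My plan is to build $F$ by a finite descent on the target span: beginning with $F_0 := E$, iteratively remove large-span cone-intersections, and lower $k$ by one each time no $k$-witness for part 2 is available on the current set. Set $k^* := k(E,G,\alpha)$; by the maximality defining $k^*$, every $M$-configuration $(z_{j_\ell})_{\ell=1}^M \subset G$, $(v_{i_\ell})_{\ell=1}^M \subseteq \{v_1, \dots, v_M\}$ with $\dim \Span\{v_{i_\ell}\} > k^*$ satisfies
\[
    \pdim\Big( E \cap \bigcap_{\ell=1}^M C(z_{j_\ell}, v_{i_\ell}) \Big) < \pdim E - \alpha.
\]

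Define $F_1$ to be $E$ minus the union of every such span-$> k^*$ cone-intersection and set $\kappa_1 := k^*$. For $i \geq 1$, test whether some $\kappa_i$-pin configuration $(z_1, \dots, z_{\kappa_i}) \subset G$ together with linearly independent normal vectors $(v_{i_1}, \dots, v_{i_{\kappa_i}})$ realizes
\[
    \pdim\Big( F_i \cap \bigcap_{\ell=1}^{\kappa_i} C(z_\ell, v_{i_\ell}) \Big) \geq \pdim E - \alpha;
\]
if so, halt with $F := F_i$ and $k := \kappa_i$. Otherwise set $\kappa_{i+1} := \kappa_i - 1$ and let $F_{i+1}$ be $F_i$ minus every $M$-cone-intersection of normal-span equal to $\kappa_i$. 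Two invariants propagate: $F_i$ contains no point of any $M$-cone-intersection of normal-span exceeding $\kappa_i$, and $\pdim F_i = \pdim E$. The latter holds because at each stage the excised set is a countable union of cone-intersections each of packing dimension strictly below $\pdim E - \alpha$---by maximality of $k^*$ at step $1$, and at later steps by the failure of the halting test, observing that any $M$-cone-intersection of span $\kappa_i$ is contained in its $\kappa_i$-fold basis sub-intersection. Countable stability of $\pdim$ then gives $\pdim(F_i \setminus F_{i+1}) \leq \pdim E - \alpha < \pdim E$, and finite stability forces $\pdim F_{i+1} = \pdim F_i$.

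Termination is guaranteed by the base case $\kappa_i = 1$: for any $z \in G$, the covering $F_i = \bigcup_{j=1}^M F_i \cap C(z, v_j)$ together with finite stability of $\pdim$ produces some $v_{j^*}$ with $\pdim(F_i \cap C(z, v_{j^*})) = \pdim F_i = \pdim E \geq \pdim E - \alpha$, the required span-$1$ witness. At termination, part 1 follows from the first invariant, part 2 from the halting test, and $\pdim F = \pdim E$ from the second invariant, with $k \leq k^*$ throughout. The main subtlety driving the iteration is the degenerate case in which every $M$-configuration attaining $k^*$ on $E$ has cone-intersection of packing dimension exactly $\pdim E - \alpha$: restricting to $F_1$ can then push the quantity strictly below the threshold, forcing the iteration to descend to a smaller target span before a part-$2$ witness appears.
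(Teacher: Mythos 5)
Your proof is correct, and it takes a genuinely different route from the paper's. The paper handles the key difficulty---that excising all high-span cone-intersections might destroy every witness configuration for part 2---by a pigeonhole over scales: the integer sequence $m \mapsto k(E,G,2^{-m}\alpha)$ is nonincreasing and bounded, so it stabilizes at some value $k$, and then a \emph{single} excision of all span-$>k$ intersections (each of packing dimension $\leq \pdim E - 2^{-m}\alpha$) provably spares a witness living at the strictly finer threshold $\pdim E - 2^{-(m+1)}\alpha$. You instead stay at the single scale $\alpha$ and resolve the same difficulty by descending in $k$: whenever the current excision leaves no span-$k$ witness above the threshold $\pdim E - \alpha$, you drop the target span by one and excise again, with the trivial base case $k=1$ (finite stability of $\pdim$ applied to $F_i = \bigcup_j F_i \cap C(z,v_j)$) guaranteeing termination. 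Your key observation---that a failed halting test at span $\kappa_i$ controls every $M$-fold intersection of span $\kappa_i$ via its $\kappa_i$-fold basis sub-intersection---is exactly what makes each excision dimension-preserving, and since the lemma only asserts $k \leq k(E,G,\alpha)$ rather than equality, ending at a possibly smaller $k$ costs nothing (downstream it only improves the bound $\tfrac{s-\alpha}{k} \geq \tfrac{s-\alpha}{d}$). The trade-off: your argument is more elementary and self-contained, at the price of an iteration of up to $d$ rounds; the paper's multi-scale pigeonhole is shorter once set up and identifies $k$ conceptually as a stabilized value of $k(E,G,\cdot)$.
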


\begin{proof}
    Let $s = \pdim E $. The sequence $k(E, G, 2^{-m}\alpha)$ for $m \in \mathbb{Z}_+$ is a monotonically decreasing sequence of integers between 1 and $d$, so pigeonhole principle implies that there exists $m \geq 1$ such that
    \begin{equation*}
        k(E, G, \alpha) \geq k\big(E, G, 2^{-(m+1)}\alpha\big) = k(E, G, 2^{-m}\alpha) =: k.
    \end{equation*}
    For each  $\bar{z} \in G^M = G \times G \times \cdots \times G$ and $\bar{v} \in \{v_1,\dots, v_M\}^M$, define
    \begin{align*}
        E_{ \bar{z}, \bar{v}} := E \cap \bigcap_{\ell=1}^{M}  C(z_{j_{\ell}}, v_{i_{\ell}}).
    \end{align*}
    If $\dim(\Span\{v_{i_1},\dots, v_{i_M}\})> k$, then 
    \begin{align*}
        \pdim(E_{\bar{z}, \bar{v}}) \leq s-2^{-m}\alpha
    \end{align*}
    for all $\bar{z} \in G^k$. If we let $E_{\bar{v}} := \bigcup_{\bar{z} \in G^M} E_{\bar{z}, \bar{v}}$, then $\pdim E_{\bar{v}} \leq s-2^{-m}\alpha$. Therefore, if $V_{k} := \{ \bar{v} \in \{v_1,\dots, v_M\}^M~:~ \dim(\Span\{v_{i_1}, \dots, v_{i_M}\})>k \}$ then 
    \begin{align*}
         F:= E \setminus \bigcup_{\bar{v} \in V_k} E_{\bar{v}}
    \end{align*}
    has $\pdim F=\pdim E$ and the desired condition holds.

    For the second part of the statement, if $k\big(E, G, 2^{-(m+1)}\alpha\big) = k$, then by definition there exist  $\{z_1, z_2, \dots, z_k\} \subset G$ and $\{v_{i_1}, v_{i_2}, \dots, v_{i_k}\} \subseteq \{v_1, \dots, v_M\}$ such that $\dim(\Span\{v_{i_1},$ $\dots,v_{i_k}\})=k$ and 
    \begin{align*}
        \pdim\left( E \cap \bigcap_{\ell=1}^{M}  C(z_{j_{\ell}},  v_{i_{\ell}}) \right)\geq \pdim E -2^{-(m+1)}\alpha.
    \end{align*}
    Finally, $\pdim E_{\bar{v}} \leq s-2^{-m}\alpha$ implies 
    \begin{equation*}
        \pdim\left( F \cap \bigcap_{\ell=1}^{M}  C(z_{j_{\ell}},  v_{i_{\ell}}) \right)\geq \pdim E -2^{-(m+1)}\alpha\geq \pdim E -\alpha. \qedhere
    \end{equation*}
\end{proof}

\begin{prop}\label{prop penultprop}
Let $E \subset  \R^d$, $\alpha>0$, and $G \subset E$ be a countable, dense subset of $E$. Then there exists  $k \in \{1,\dots, d\}$, $\{z_1,\dots, z_k\} \subset G$, $\{w_1, w_2, \dots, w_k\} \subset \{v_1, \dots, v_M\}$, and $F \subset E$, such that $\pdim F \geq\pdim E -\alpha$, and 
    \begin{align*}
        F \subset \bigcap_{\ell=1}^k C(z_{\ell}, w_{\ell}).
    \end{align*}
Moreover, $V:=\Span\{w_1,\dots, w_k\}\in G(d, k)$ and for each $x, y \in F$
    \begin{align*}
        \|x-y\|_*= \|P_V(x-y)\|_*.
    \end{align*}
\end{prop}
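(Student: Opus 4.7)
The plan is to apply Lemma \ref{lem contra1} directly to $E$, $G$, and $\alpha$, producing an integer $k \in \{1, \ldots, d\}$, a subset $F_0 \subseteq E$ with $\pdim F_0 = \pdim E$ obeying the maximality property (1), and, by property (2), pins $z_1, \ldots, z_k \in G$ and direction vectors $w_1, \ldots, w_k := v_{i_1}, \ldots, v_{i_k}$ such that $\dim \Span\{w_1, \ldots, w_k\} = k$ and $\pdim\big( F_0 \cap \bigcap_{\ell=1}^{k} C(z_\ell, w_\ell) \big) \geq \pdim E - \alpha$. I then define $F := F_0 \cap \bigcap_{\ell=1}^{k} C(z_\ell, w_\ell)$ and $V := \Span\{w_1, \ldots, w_k\}$; the containment $F \subset \bigcap_\ell C(z_\ell, w_\ell)$, the fact that $V \in G(d, k)$, and the bound $\pdim F \geq \pdim E - \alpha$ are then immediate. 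It remains only to verify the norm identity.

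The central technical step is to show that for every $x \in F$ and every $y \in E$, the norm $\|x - y\|_*$ is attained by some $v^* \in V \cap \{v_1, \ldots, v_M\}$. To prove this I would use the density of $G$ in $E$ to pick $z_{j_n} \in G$ with $z_{j_n} \to y$, and for each $n$ select $v^*_n \in \{v_1, \ldots, v_M\}$ achieving $\|x - z_{j_n}\|_* = |\langle x - z_{j_n}, v^*_n\rangle|$, so that $x \in C(z_{j_n}, v^*_n)$. Since $x$ also lies in $C(z_\ell, w_\ell)$ for each $\ell$, padding with repetitions produces an $M$-tuple of cones whose joint intersection with $F$ contains $x$, and property (1) of Lemma \ref{lem contra1} forces $\dim \Span\{w_1, \ldots, w_k, v^*_n\} \leq k$, i.e., $v^*_n \in V$. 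As the finite set $V \cap \{v_1, \ldots, v_M\}$ admits only finitely many choices, pigeonhole yields a subsequence on which $v^*_n$ is constantly equal to some $v^*$, and passage to the limit delivers $\|x - y\|_* = |\langle x - y, v^*\rangle|$ with $v^* \in V$.

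Applying this identity to $x, y \in F$ and using $v^* \in V$ gives $|\langle x - y, v^*\rangle| = |\langle P_V(x - y), v^*\rangle| \leq \|P_V(x - y)\|_*$, so $\|x - y\|_* \leq \|P_V(x - y)\|_*$. For the reverse direction, I would decompose $\|P_V(x - y)\|_* = \max_i |\langle P_V(x - y), v_i\rangle|$ according to whether $v_i \in V$: contributions from $v_i \in V$ are controlled by $\|x - y\|_*$ via the definition of $\|\cdot\|_*$, while contributions from $v_i \notin V$ take the form $|\langle x - y, P_V v_i\rangle|$ and must be bounded by combining the key identity with the maximality of $k$. The main obstacle I anticipate is this last step: ruling out any \emph{excess} mass in the projection from directions outside $V$ is where the maximality from Lemma \ref{lem contra1} must be used most delicately, ensuring that no $v_i \notin V$ yields a value after projection exceeding $\|x - y\|_*$.
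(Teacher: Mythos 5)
Your construction and the half of the norm identity you do prove follow the paper's proof essentially verbatim: the paper likewise takes $F := F_0 \cap \bigcap_{\ell=1}^k C(z_\ell,w_\ell)$ and $V := \Span\{w_1,\dots,w_k\}$, uses the maximality property (1) of Lemma \ref{lem contra1} to conclude that any $v \in \{v_1,\dots,v_M\}$ with $x \in C(z,v)$, $x \in F$, $z \in G$, must lie in $V$, and then obtains $\|z-x\|_* = |\langle z-x, v\rangle| = |\langle P_V(z-x), v\rangle| \leq \|P_V(z-x)\|_*$, extending from $z \in G$ to $y \in F$ by density of $G$ and continuity of $\|\cdot\|_*$ and $P_V$ (your pigeonhole on the finitely many maximizing vectors accomplishes the same extension). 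So the inequality $\|x-y\|_* \leq \|P_V(x-y)\|_*$ is correctly established and by the same route as the paper.

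The step you flag as ``the main obstacle'' is a genuine gap, and your suspicion is well founded: the reverse inequality $\|P_V(x-y)\|_* \leq \|x-y\|_*$ cannot be deduced from the maximality of $k$, because an orthogonal projection need not be a contraction for $\|\cdot\|_*$, and maximality gives no control on $|\langle x-y, P_V v_i\rangle|$ for $v_i \notin V$. In fact, in the paper's framework (which does not normalize the $v_i$) the asserted equality can fail outright: take $d=2$, $v_1=(1,0)$, $v_2=(2,1)$, and $E$ a segment of the line $2x_1+x_2=0$, so $\pdim E = 1$. For $x,y \in E$ one has $\|x-y\|_* = |x_1-y_1|$, attained only at $v_1$, and $C(z,v_2)\cap E=\{z\}$ for $z \in E$; hence for $\alpha<1$ any admissible choice forces $V=\Span\{v_1\}$, yet then $\|P_V(x-y)\|_* = 2|x_1-y_1| = 2\|x-y\|_*$ for all distinct $x,y\in F$. (The paper's own proof does not supply this direction either: it is the unjustified first inequality in its displayed chain.) The practical resolution is that nothing downstream needs the equality: the proof of Theorem \ref{thm:main} in the polyhedral case only uses that $P_V|_F$ is bi-Lipschitz, which follows from the direction you did prove together with the trivial Euclidean bound $\|P_V(x-y)\| \leq \|x-y\|$ and equivalence of norms. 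So rather than trying to rule out ``excess mass'' from the $v_i \notin V$, you should weaken the conclusion to the one-sided bound $\|x-y\|_* \leq \|P_V(x-y)\|_*$, which your argument already gives and which suffices for the application.
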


\begin{proof}
Lemma \ref{lem contra1} implies that  there exist an integer $k \in \{1, \dots, d\}$,  $\{z_1, z_2, \dots, z_k\} \subset G,$ $\{w_1, w_2, \dots, w_k\} \subset \{v_1, \dots, v_M\}$, and $F_0 \subset E$ such that $\pdim F_0 = \pdim E $, 
\begin{align*}
    \dim\big(\Span\{w_1, w_2, \dots, w_k\}\big) = k
\end{align*}
and 
\begin{align*}
    \pdim\!\left( F_0 \cap \bigcap_{\ell=1}^{k} C(z_{j_{\ell}}, w_{\ell}) \right) \geq \pdim E - \alpha.
\end{align*}

Moreover, inequality \eqref{eq maximal} implies that if $F_0 \cap \bigcap_{\ell=1}^{M}  C(z_{\ell}, w_{\ell}) \neq \varnothing$ for any $\{z_1, \dots, z_M\} \subset G$ and $(v_{j_1}, \dots, v_{j_M}) \in \{v_1, \dots, v_M\}^M$, then 
\begin{align*}
    \dim\big(\Span\{v_{j_1},\dots, v_{j_M}\}\big) \leq k.
\end{align*}

Let $F:= F_0 \cap \bigcap_{\ell=1}^{k}  C(z_{j_{\ell}},  w_{\ell})$ and  $V:=\Span\{w_1,\dots, w_k\}$. If $x \in F$, $z \in G$, and $v \in \{v_1, \dots, v_M\}$ satisfy $x \in C(z, v)$ then 
\begin{align*}
    \dim\big(\Span\{w_1,\dots, w_k, v\}\big) \leq k,
\end{align*}
which implies $v \in \Span\{w_1,\dots, w_k\}$. This further implies that for any $x \in F$ and $z \in G$, there is a $v \in \{v_1, \dots, v_M\}$ such that
\begin{align*}
    \|P_V(z-x)\|_* \leq\|z-x\|_*&= |\langle z-x, v\rangle| \\
    &= |\langle P_V(z-x), v\rangle +\langle P_V^{\perp}(z-x), v\rangle| =|\langle P_V(z-x), v\rangle|\\
    &\leq \|P_V(z-x)\|_*.
\end{align*}
Since $G$ is dense in $E$, $G$ is dense in  $F$ and $\|\cdot\|$ and $P_V$ are continuous, we can conclude that for every $x, y \in F$,   
\begin{equation*}
    \|P_V(y-x)\|_*=\|y-x\|_*. \qedhere
\end{equation*}
% The set of combinations of $\{z_1, \dots, z_M\} \subset G$ and $\{v_{i_1},\dots, v_{i_M}\}$ is countable and to each $\{v_{i_1},\dots, v_{i_M}\}$ we can associate one element, $V=V(\{v_{i_1},\dots, v_{i_M}\})$, in $G(d, k)$ such that 
%     \begin{align*}
%         \Span\{v_{i_1},\dots, v_{i_M}\} \subset V.
%     \end{align*}
% Let 
%     \begin{align*}
%         \mathcal{G}:= \left\{V(\{v_{i_1},\dots, v_{i_M}\})~:~ \{v_{i_1},\dots, v_{i_M}\} \right\}
%     \end{align*}
\end{proof}

We are now ready to prove Theorem \ref{thm:main} in the case of polyhedral norms.

\begin{proof}[Proof of Theorem \ref{thm:main} in the polyhedral case]
For $E \subset \R^d$, $\alpha>0$, and $G\subset E$ with $\pdim E=s$ and with $G$ countable and dense in $E$, let $F \subset E$, $k$, $\{z_1, \dots, z_k\} \subset G$ and $\{w_1, \dots, w_k\}\subset \{v_1, \dots, v_M\}$ be the sets given by Proposition \ref{prop penultprop}. Let $V= \Span\{w_1, \dots, w_k\}$. Then Proposition \ref{prop penultprop} implies $\|x-y\|_*= \|P_V(x-y)\|_*$ for all $x, y \in F$. Therefore, the projection onto $V$ restricted to the set $F$ is a bi-Lipschitz map. Thus, $\pdim F= \pdim(P_VF) \geq s-\alpha$.

For each $j \in \{1, \dots, k\}$, let $P_j(x):= \langle w_j, x\rangle \tfrac{w_j}{\|w_j\|}$ be the orthogonal projection onto the line spanned by $w_j$. Then since $F \subset C(z_j, w_j)$, we have $P_j(x-z_j)= \|x-z_j\|_* \tfrac{w_j}{\|w_j\|}$ whenever $x \in F$. Therefore,
\begin{align*}
    \pdim P_j(F- z_j) =\pdim \Delta^*_{z_j}(F).
\end{align*}
Now Theorem \ref{thm:orthogonal-jarvenpaa} implies that for some $j \in \{1, ..., k\}$ 
\begin{align*}
    \pdim P_{j}(P_VF)\geq \frac{1}{k}  \pdim P_VF \geq \frac{s-\alpha}{k}.
\end{align*}
Since $P_VP_j= P_jP_V$ and packing dimensions are stable under translations, $\pdim P_{j}(P_VF)=\pdim P_V(P_jF)=\pdim P_V(P_j(F-z_j))=\pdim P_j(F-z_j)=\pdim \Delta^*_{z_j}(F)$. In conclusion,
\begin{align*}
    \Delta^*_{z_j}(F)\geq \frac{s-\alpha}{k}\geq \frac{s-\alpha}{d}
\end{align*}
for arbitrary $\alpha>0$.
\end{proof}

%%%%%%%%%%%%%%%%%%%%%%%%%%%%%%%%%%%%%%%%%%%%%%
%%% Section 5: Transversality for C^1 Case %%%
%%%%%%%%%%%%%%%%%%%%%%%%%%%%%%%%%%%%%%%%%%%%%%

\section{Establishing weak transversality in the \texorpdfstring{$C^1$}{C1} case} \label{s:C1-transversality}

\noindent Let $(\R^n, \|\cdot\|_*)$ be represented by $X$ as a Banach space with dual space $X^*$. For $x \in X$ and $f \in X^*$ we have the pairing $f(x)= \langle f, x\rangle$. Throughout this section, $B(x,r) := \{ y \in \R^d \!: \|x-y\|_* < r \}$ denotes a ball with respect to the norm $\| \cdot \|_*$ and $N_r(E):= \{ y \in \mathbb{R}^d\!: \|x-y\|_*< r \mbox{ for some } x \in E\}$ denotes that $r$-neighborhood of a set $E$.

We first define the subdifferential:

\begin{defn}[Subdifferential]
    For any $x \in \Omega \subset X$ and any $u: \Omega \rightarrow \R$, the \textit{subdifferential} $\partial u(x)$ of $u$ at $x$ is defined by
    \begin{align*}
        \partial u(x) := \left\{p \in X^*: \liminf _{y \rightarrow x} \frac{u(y)-u(x)-\langle p, y-x\rangle}{\|y-x\|_*} \geq 0\right\}.
    \end{align*}
\end{defn}
Of course, for a convex function, the $\liminf_{y \to x}$ can be removed without affecting the definition.  The following appears in  the text by Cioranescu \cite{cioranescu2012geometry}.

\begin{defn}[Duality mapping]
    We define the \textit{duality mapping} as a function from $X$ to the power set of $X^*$, $J: X \to 2^{X^*}$, by
    \begin{align*}
        Jx := \big\{ f\in X^*~:~ \left\langle f, x \right\rangle = \|f\|_{X^*} \|x\|_X,\, \|f\|_{X^*}= \|x\|_X \big\} 
    \end{align*}
\end{defn}

\begin{thm}[Asplund \cite{cioranescu2012geometry} Theorem I.4.4] \label{thm Asplund}
    For each $x \in X$, 
    \begin{align*}
        Jx = \partial \left( \tfrac{1}{2} \|x\|_X^2 \right)= \|x\|_X \cdot \partial \|x\|_X.
    \end{align*}
\end{thm}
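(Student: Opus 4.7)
The plan is to prove the two equalities separately: first $Jx = \partial\bigl(\tfrac{1}{2}\|x\|_X^2\bigr)$, and then $\partial\bigl(\tfrac{1}{2}\|x\|_X^2\bigr) = \|x\|_X \cdot \partial \|x\|_X$. Because $\tfrac{1}{2}\|\cdot\|_X^2$ is convex, throughout the first step I can bypass the $\liminf$ in the definition and use the global criterion $p \in \partial u(x) \iff u(y) \ge u(x) + \langle p, y-x\rangle$ for all $y$.

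For the inclusion $Jx \subseteq \partial\bigl(\tfrac{1}{2}\|x\|_X^2\bigr)$, I would take $f \in Jx$ and verify the subgradient inequality directly: substituting $\langle f, x\rangle = \|x\|_X^2$ and bounding $\langle f, y\rangle \le \|f\|_{X^*}\|y\|_X = \|x\|_X\|y\|_X$, a one-line rearrangement gives
\begin{equation*}
    \tfrac{1}{2}\|y\|_X^2 - \tfrac{1}{2}\|x\|_X^2 - \langle f, y-x\rangle = \tfrac{1}{2}\|y\|_X^2 + \tfrac{1}{2}\|x\|_X^2 - \langle f, y\rangle \ge \tfrac{1}{2}\bigl(\|y\|_X - \|x\|_X\bigr)^2 \ge 0.
\end{equation*}
For the reverse inclusion, I would extract the two defining properties of $Jx$ from the subgradient inequality by choosing clever test points. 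Plugging $y = tx$ gives $\tfrac{1}{2}(t^2-1)\|x\|_X^2 \ge (t-1)\langle f, x\rangle$; dividing by $t-1$ and sending $t \to 1$ from both sides sandwiches $\langle f, x\rangle = \|x\|_X^2$. Next, replacing $y$ by $ty$ for $t > 0$ and optimizing at $t = \|x\|_X/\|y\|_X$ yields $\langle f, y\rangle \le \|x\|_X\|y\|_X$ for every $y$, so $\|f\|_{X^*} \le \|x\|_X$; combined with $\|x\|_X^2 = \langle f, x\rangle \le \|f\|_{X^*}\|x\|_X$, this forces $\|f\|_{X^*} = \|x\|_X$ and hence $f \in Jx$.

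For the second equality I would invoke the convex-analytic chain rule applied to the factorization $\tfrac{1}{2}\|\cdot\|_X^2 = \phi \circ \|\cdot\|_X$, where $\phi(t) = t^2/2$ is convex, $C^1$, and nondecreasing on the range $[0, \infty)$ of the norm. Because $\phi$ is differentiable with $\phi'(t) = t$, the chain rule delivers $\partial(\phi \circ \|\cdot\|_X)(x) = \phi'(\|x\|_X)\,\partial\|x\|_X = \|x\|_X \cdot \partial\|x\|_X$ at every $x \ne 0$. The main obstacle — really the only mildly delicate point — is the case $x = 0$: there $\partial\|\cdot\|_X(0)$ equals the full closed dual unit ball, but multiplication by $\|0\|_X = 0$ collapses the right-hand side to $\{0\}$, which matches $\partial\bigl(\tfrac{1}{2}\|\cdot\|_X^2\bigr)(0) = \{0\}$ (consistent with $J0 = \{0\}$ obtained from the first equality). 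Verifying this edge case, and selecting a statement of the chain rule valid despite the nondifferentiability of $\|\cdot\|_X$, are the only technical wrinkles; everything else is routine convex manipulation.
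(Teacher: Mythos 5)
Your argument is correct, but note that the paper does not prove this statement at all: it is quoted verbatim as Asplund's theorem, Cioranescu \cite{cioranescu2012geometry} Theorem I.4.4, and used as a black box (only through the consequence $\langle x, \nabla\|x\|_*\rangle = \|x\|_*$ in Lemma \ref{lem direction}). So your self-contained verification is necessarily a different route, and it is a sound one. The first equality is handled exactly as in the standard proof: the identity $\tfrac{1}{2}\|y\|^2+\tfrac{1}{2}\|x\|^2-\langle f,y\rangle \geq \tfrac{1}{2}(\|y\|-\|x\|)^2$ gives $Jx \subseteq \partial(\tfrac{1}{2}\|x\|^2)$, and your test points $y=tx$ and $y=ty$ recover the two defining conditions of $Jx$; the only unstated degenerate case is $x=0$ (your optimizer $t=\|x\|/\|y\|$ vanishes there), but letting $t\to 0^+$ gives $\langle f,y\rangle\le 0$ for all $y$, hence $f=0=J0$, so nothing breaks. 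You are also right that the paper's liminf-type subdifferential coincides with the convex subdifferential here, which legitimizes the global subgradient inequality. For the second equality, your chain-rule route works (with the small bookkeeping that $\phi(t)=t^2/2$ should be replaced by $\tfrac{1}{2}\max(t,0)^2$, or the chain rule stated for an interval containing the range of the norm, so that monotonicity holds; and the qualification condition is automatic since the paper's $X$ is finite-dimensional), and you correctly isolate $x=0$ as the case where $\|x\|\cdot\partial\|x\|$ collapses to $\{0\}$. A slightly leaner alternative, closer in spirit to what you already did for $Jx$, is to bypass the chain rule entirely: for $x\ne 0$, $g\in\partial\|\cdot\|(x)$ if and only if $\|g\|_{X^*}=1$ and $\langle g,x\rangle=\|x\|$, so multiplying by $\|x\|$ gives exactly the defining conditions of $Jx$, and the second equality follows from the first by direct inspection. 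Either way, the statement is fully established.
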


For a finite collection of vectors $\{v_1, \dots, v_m\} \subset \R^d$ with $m \leq d$ we denote the $\mathcal{H}^m$-measure of the parallelepiped defined by $\{v_1, \dots, v_m\}$ by
\begin{align*}
    \left|\big( v_1~|~v_2~|~ \cdots ~|~ v_m\big) \right|.
\end{align*}
We also introduce the cone notation with respect to orthogonal projection $P_V$ for $V \in G(d,k)$, the Grassmannian of $k$-dimensional subspaces of $\R^d$. With $a \in \R^d$, $s \in (0,1)$, and $V \in G(d,k)$ we denote
\begin{align*}
    X(a,V, s) := \left\{x \in \R^d~:~ \|P_{V^{\perp}}(x-a)\|<s\|x-a\| \right\},
\end{align*}
where we recall that $\|\cdot\|$ represents the Euclidean norm.

\begin{lem} \label{lem direction}
    Let $z \in \R^d$. There exists $\eta=\eta(d, \|\cdot\|_*)$  such that 
    \begin{align*}
        z \in X(0, \Span( \nabla \|z\|_*), \eta ).
    \end{align*}
\end{lem}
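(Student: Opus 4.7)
The plan is to reduce the statement to a compactness argument on the Euclidean unit sphere $S^{d-1}$. The key observation is that the Euclidean projection of $z$ onto the line $V := \Span(\nabla\|z\|_*)$ always captures a definite fraction of the Euclidean length of $z$, and that this fraction is bounded below uniformly in the direction of $z$ thanks to the $C^1$ assumption.

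The first step is the Euler identity
\begin{equation*}
    \langle \nabla\|z\|_*, z\rangle = \|z\|_*,
\end{equation*}
which I would extract either by differentiating $t \mapsto \|tz\|_* = t\|z\|_*$ at $t=1$ or by invoking Theorem \ref{thm Asplund}, noting that under the $C^1$ assumption $\partial \|z\|_* = \{\nabla\|z\|_*\}$. Writing $w := \nabla \|z\|_*$, this immediately yields
\begin{equation*}
    \|P_V z\| \;=\; \frac{|\langle w, z\rangle|}{\|w\|} \;=\; \frac{\|z\|_*}{\|w\|},
\end{equation*}
so the task becomes bounding $\|z\|_*/(\|w\|\,\|z\|)$ from below uniformly in $z$.

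For this I would introduce the ratio $f(u) := \|u\|_*/\|\nabla\|u\|_*\|$ on $S^{d-1}$. Continuity of $f$ follows from the $C^1$ hypothesis, and $f$ is strictly positive on $S^{d-1}$ because the Euler identity forces $\nabla\|u\|_* \neq 0$ whenever $\|u\|_* > 0$. Compactness of $S^{d-1}$ then produces $\alpha := \min_{u \in S^{d-1}} f(u) > 0$, a quantity depending only on $d$ and $\|\cdot\|_*$. The $0$-homogeneity $\nabla\|tz\|_* = \nabla\|z\|_*$ for $t > 0$ upgrades this to an estimate for every nonzero $z$, namely $\|P_V z\| \geq \alpha \|z\|$.

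Pythagoras then gives $\|P_{V^\perp} z\|^2 \leq (1-\alpha^2)\|z\|^2$, and any choice of $\eta$ in the (nonempty because $\alpha>0$) open interval $(\sqrt{1-\alpha^2},1)$ supplies the strict inequality $\|P_{V^\perp} z\| < \eta\|z\|$, which is exactly the statement $z \in X(0,V,\eta)$. The only step that merits attention is ensuring $\alpha$ is \emph{strictly} positive, as otherwise $\eta < 1$ could not be chosen; this is handled by the nonvanishing of $\nabla\|u\|_*$, which itself is a direct consequence of the Euler identity, so the argument does not meet any real obstacle.
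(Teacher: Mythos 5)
Your proposal is correct and follows essentially the same route as the paper: both rest on the Euler/Asplund identity $\langle z,\nabla\|z\|_*\rangle=\|z\|_*$, a uniform comparability constant, and Pythagoras to convert a lower bound on $\|P_{\Span(\nabla\|z\|_*)}z\|$ into membership in the cone $X(0,\Span(\nabla\|z\|_*),\eta)$. The only difference is cosmetic: you extract the uniform constant by minimizing $\|u\|_*/\|\nabla\|u\|_*\|$ over $S^{d-1}$ via continuity and compactness, whereas the paper cites equivalence of norms (with the dual-norm normalization $\|\nabla\|z\|_*\|_{X^*}=1$ from Asplund's theorem handling the gradient's length implicitly), so your write-up is in fact slightly more explicit about bounding $\|\nabla\|z\|_*\|$.
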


\begin{proof}
    By Asplund's Theorem \ref{thm Asplund}, $\langle z, \nabla \|z\|_*\rangle = \|z\|_* $. If $\langle \cdot, \cdot \rangle$ is the Euclidean inner product and $\|\cdot\|$ is the Euclidean norm, then since all norms are equivalent in finite-dimensional normed spaces there is a constant $\Lambda=\Lambda(d, \|\cdot\|_*)\in [0, 1/2]$ such that
        \begin{align*}
            \Lambda \|z\| \leq \langle z, \nabla \|z\|_*\rangle \leq \Lambda^{-1} \|z\|
        \end{align*}
    and thus we can let $\eta= \sqrt{1-\Lambda^2}$.
\end{proof}

\begin{lem} \label{lem gradest}
    Let $\eps >0$. There exists a function, $h: (0, \infty) \to (0, \infty)$ such that $\lim_{\varepsilon \to 0} \frac{h(\varepsilon)}{\varepsilon}$ $= 0$ and, for all $x \in \R^d$ satisfying $\|x-y\|_*< \eps$,
    \begin{align*}
        \big| \|x\|_*-\|y\|_* \big|+h(\varepsilon) \geq \big|\langle \nabla \|x\|_*, x-y \rangle\big|.
    \end{align*}
\end{lem}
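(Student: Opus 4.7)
The plan is to sandwich the quantity $\langle \nabla\|x\|_*, x-y\rangle$ between two expressions whose gap reduces to $\bigl|\|x\|_* - \|y\|_*\bigr|$ plus an $o(\varepsilon)$ error. The key input is Asplund's Theorem \ref{thm Asplund}, which together with the $C^1$ assumption guarantees that $\nabla\|x\|_*$ is a single-valued selection of $\partial\|x\|_*$, so the subgradient inequality for the convex function $\|\cdot\|_*$ applies at both endpoints:
\[
\|y\|_* \;\geq\; \|x\|_* + \langle \nabla\|x\|_*,\, y-x \rangle \qquad \text{and} \qquad \|x\|_* \;\geq\; \|y\|_* + \langle \nabla\|y\|_*,\, x-y \rangle.
\]

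The first inequality rearranges to the lower bound $\langle \nabla\|x\|_*, x-y\rangle \geq \|x\|_* - \|y\|_*$. For the matching upper bound, I would write
\[
\langle \nabla\|x\|_*,\, x-y \rangle \;=\; \langle \nabla\|y\|_*,\, x-y \rangle + \langle \nabla\|x\|_* - \nabla\|y\|_*,\, x-y \rangle,
\]
apply the second subgradient inequality to the first piece, and bound the second piece by $\bigl\|\nabla\|x\|_* - \nabla\|y\|_*\bigr\|_{X^*}\,\|x-y\|_*$ via the duality pairing. This yields the two-sided estimate
\[
0 \;\leq\; \langle \nabla\|x\|_*,\, x-y \rangle - \bigl(\|x\|_* - \|y\|_*\bigr) \;\leq\; \bigl\|\nabla\|x\|_* - \nabla\|y\|_*\bigr\|_{X^*}\, \|x-y\|_*,
\]
which, after taking absolute values, implies the claimed inequality with
\[
h(\varepsilon) \;:=\; \sup\Bigl\{ \bigl\|\nabla\|x\|_* - \nabla\|y\|_*\bigr\|_{X^*}\, \|x-y\|_* \;:\; \|x-y\|_* < \varepsilon \Bigr\}.
\]

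It remains to verify $h(\varepsilon)/\varepsilon \to 0$. By the $C^1$ hypothesis, $\nabla\|\cdot\|_*$ is continuous on $\R^d \setminus \{0\}$; because $\|\cdot\|_*$ is $1$-homogeneous, $\nabla\|\cdot\|_*$ is $0$-homogeneous and hence factors through the compact unit sphere of $\|\cdot\|_*$, on which it is uniformly continuous with some modulus $\omega$ satisfying $\omega(\delta) \to 0$ as $\delta \to 0$. Thus $\bigl\|\nabla\|x\|_* - \nabla\|y\|_*\bigr\|_{X^*} \leq \omega(\|x-y\|_*)$ and $h(\varepsilon) \leq \omega(\varepsilon)\,\varepsilon = o(\varepsilon)$. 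The main technical concern is uniformity near the origin, where the radial comparison underlying $\omega$ becomes singular; this is handled by the implicit restriction to the bounded, $0$-avoiding regime in which the lemma will be applied when establishing weak transversality of the pinned distance maps.
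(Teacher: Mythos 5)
Your two-sided subgradient argument is the right way to flesh out the paper's one-line proof (``follows from the definition of gradient''): for $x,y\neq 0$ the chain $0\le \langle \nabla\|x\|_*,x-y\rangle-(\|x\|_*-\|y\|_*)\le \big\|\nabla\|x\|_*-\nabla\|y\|_*\big\|_{X^*}\,\|x-y\|_*$ is correct and reduces the lemma to a uniform continuity statement for $\nabla\|\cdot\|_*$; in that respect your route is essentially the intended one, just more explicit.

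The genuine gap is in the last step, and your closing sentence does not close it. Since $\nabla\|\cdot\|_*$ is $0$-homogeneous, uniform continuity on the unit sphere only gives $\big\|\nabla\|x\|_*-\nabla\|y\|_*\big\|_{X^*}\le \omega(\text{angular separation})$, and the angular separation of $x$ and $y$ is of order $\|x-y\|_*/\|x\|_*$, not $\|x-y\|_*$. Consequently the $h$ you define by a supremum over \emph{all} pairs with $\|x-y\|_*<\eps$ is comparable to $\eps$, not $o(\eps)$: for the Euclidean norm take $x=\delta e_1$, $y=-\delta e_1$ with $2\delta<\eps$; then $\big|\|x\|-\|y\|\big|=0$ while $\big|\langle\nabla\|x\|,x-y\rangle\big|=2\delta$, which forces $h(\eps)\ge\eps$ for any admissible $h$. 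This shows more: the lemma as literally stated, uniformly over all $x\in\R^d$, is false, so no choice of $h$ can repair the argument without adding a hypothesis. The statement (and your proof, which then works verbatim with $h(\eps)\le\omega_c(\eps)\,\eps$) is correct once one restricts to $\|x\|_*\ge c>0$, with $h$ depending on $c$. Deferring this to ``the regime in which the lemma will be applied'' is not automatic either: in Lemma \ref{lem geomlem} the estimate is invoked at the points $x-z_i$ with $x\in F$ and pins $z_i\in G\subset E$, and nothing stated there keeps $x$ away from the pins; one should first remove a small neighborhood of the finitely many pins from $F$ (harmless for packing dimension, by countable stability), and then your argument, and the paper's, goes through. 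To be fair, the paper's own one-line proof is silent on exactly the same point, and you at least flagged it—but as written your $h$ does not satisfy $h(\eps)/\eps\to 0$, so the proof is incomplete without the restriction made explicit.
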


\begin{proof}
    Follows from the definition of gradient.
\end{proof}

Now we define transversality with respect to this norm. For $x, y \in \R^d$ and $m \in \Z_+$, define
\begin{align*}
    p_y(x)&:= \|x-y\|_*.
\end{align*}

\begin{defn}[$\bm{k}$-transversality]
    Let $k \in \{1, \dots, d\}$. We say a set $E \subseteq \R^d$ is \textit{$k$-transversal} with respect to $G \subseteq \R^d$ if there exist $L \in (0,1)$, $\{z_1,\dots,z_k\} \subseteq G$, 
    \begin{align*}
        \left|\big(\nabla p_{z_1}(x)~|~\nabla p_{z_2}(x)~|~ \cdots ~|~ \nabla p_{z_k}(x)\big)\right|\geq L
    \end{align*}
    for all $x \in E$.
\end{defn}

\begin{defn}[$\bm{(\alpha, k)}$-transversality]
    Let $k \in \{1, \dots, d\}$ and $\alpha>0$. We say a set $E \subseteq \R^d$ is \textit{$(\alpha, k)$-transversal} with respect to $G \subseteq \R^d$ if there exists  $F \subseteq E$  such that $\pdim F \geq \pdim E-\alpha$ and $F$ is $k$-transversal with respect to $G$.
\end{defn}

\begin{lem}\label{lem contra}
    If $E \subseteq \R^d$ is not $(\alpha, k)$-transversal with respect to a countable set $G \subset \R^d$, then there exists $F \subseteq E$ (with $\pdim E=\pdim F$) such that  
    \begin{align*}
        \left|\big(\nabla p_{z_1}(x)~|~\nabla p_{z_2}(x)~|~ \cdots ~|~ \nabla p_{z_k}(x)\big)\right| =0
    \end{align*}
    for all $\{z_1, \dots, z_k\} \subset G$ and $x \in F$.
\end{lem}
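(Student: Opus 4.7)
The plan is to argue by contrapositive via a countable decomposition: write $E$ as a ``good'' subset on which every gradient parallelepiped vanishes, together with a countable collection of ``bad'' pieces each corresponding to a single $k$-tuple of pins and a single lower bound $1/n$ on the parallelepiped volume. Failure of $(\alpha,k)$-transversality forces each bad piece to have packing dimension strictly smaller than $\pdim E$, and countable stability of $\pdim$ then leaves the good subset with full packing dimension.

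Concretely, for each $n \in \mathbb{Z}_+$ and each tuple $\bar{z} = (z_1,\ldots,z_k) \in G^k$, I would define
\begin{align*}
    A_{n,\bar{z}} := \Big\{ x \in E \setminus \{z_1,\ldots,z_k\} : \left|\big(\nabla p_{z_1}(x)~|~\cdots~|~\nabla p_{z_k}(x)\big)\right| \geq 1/n \Big\},
\end{align*}
which is sensible because under the $C^1$ hypothesis, $\nabla p_{z_j}$ exists away from $z_j$. If $\pdim A_{n,\bar{z}} \geq \pdim E - \alpha$ for some $n$ and $\bar{z}$, then taking $F := A_{n,\bar{z}}$, $L := 1/n$, and the pins $\{z_1,\ldots,z_k\}$ would directly witness $(\alpha,k)$-transversality of $E$, contradicting the hypothesis. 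Hence $\pdim A_{n,\bar{z}} < \pdim E - \alpha$ for every admissible $n$ and $\bar{z}$.

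Since $G$ is countable, $A := \bigcup_{n,\bar{z}} A_{n,\bar{z}}$ is a countable union, and countable stability of packing dimension (built into \eqref{eq:box-to-packing}) gives
\begin{align*}
    \pdim A \leq \sup_{n,\bar{z}} \pdim A_{n,\bar{z}} \leq \pdim E - \alpha < \pdim E.
\end{align*}
I would then set $F := E \setminus (A \cup G)$, discarding both the bad set and the countable (hence packing-dimension-zero) set $G$ at which the gradients may fail to exist. Finite stability of $\pdim$ applied to $E = F \cup (A \cup G)$ forces $\pdim F = \pdim E$, and by construction every $x \in F$ and every $\{z_1,\ldots,z_k\} \subset G$ satisfy $\left|\big(\nabla p_{z_1}(x)~|~\cdots~|~\nabla p_{z_k}(x)\big)\right| < 1/n$ for every $n$, and therefore the parallelepiped volume vanishes identically on $F$.

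I expect no serious obstacle here; the argument is a standard contrapositive paired with countable-stability pigeonholing, and its efficacy rests entirely on the fact that packing dimension, unlike upper box dimension, is countably stable. The only mildly delicate point is bookkeeping the countably many points $x = z_j$ where $\nabla p_{z_j}$ is undefined, and this is absorbed harmlessly into the countable set $G$.
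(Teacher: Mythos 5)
Your argument is correct and is essentially the paper's own proof: the paper defines the sets $E_{j,\bar z}$ with thresholds $2^{-j}$ in place of your $1/n$, observes that failure of $(\alpha,k)$-transversality forces $\pdim E_{j,\bar z} \leq \pdim E - \alpha$ for every $j$ and $\bar z \in G^k$, and then takes $F = E \setminus \bigcup_{j,\bar z} E_{j,\bar z}$ using countable stability of packing dimension, exactly as you do. Your additional step of discarding the countable set $G$ (where some gradient $\nabla p_{z_j}$ may be undefined) is a harmless refinement that the paper leaves implicit.
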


\begin{proof}
    Let $s = \pdim E$ and suppose $E$ is not $(\alpha, k)$-transversal.  Let $\eps_j:=2^{-j}$. For each $j$, and $\bar{z} \in G^k= G 
    \times G \times \cdots \times G$ define
        \begin{align*}
            E_{j, \bar{z}} := \left\{x \in E~:~ \left|\big(\nabla p_{z_1}(x)~|~\nabla p_{z_2}(x)~|~ \cdots ~|~ \nabla p_{z_k}(x)\big)\right| \geq 2^{-j}  \mbox{ for } \bar{z}= (z_1, \dots, z_k)\right\}.
        \end{align*}
    Since $E$ is not $(\alpha, k)$-transversal, 
        \begin{align*}
            \pdim(E_{j, \bar{z}}) \leq s-\alpha
        \end{align*}
    for all $\bar{z} \in G^k$. If we let $E_j:= \bigcup_{\bar{z} \in G^k} E_{j, \bar{z}}$, then $\pdim(E_j) \leq s-\alpha$. Therefore, if $F_j:= E \setminus E_j$, then for every $x \in F_j$ and $\{z_1, \dots, z_k\} \subset G$ we have
        \begin{align*}
             \left|\big(\nabla p_{z_1}(x)~|~\nabla p_{z_2}(x)~|~ \cdots ~|~ \nabla p_{z_k}(x)\big)\right| \leq 2^{-j}.
        \end{align*}
    Since $\cup_j E_j$ has dimension less than $s-\alpha$, $F:= \cap_j F_j$ has dimension $s$ and the proof is complete. 
\end{proof}

% \begin{lem} \label{lem induct}
%     Let $\alpha>0$ and $k \in \{1, \dots, d-1\}$. If $E \subset \R^d$ is $(\alpha, k+1)$-transversal with respect to a countable set $G$, then $E$ is $(\alpha, k)$-transversal with respect to $G$.
% \end{lem}

% \begin{proof}
%     Let $(z_1, \dots, z_{k+1}) \in G^{k+1}$. Then the Loomis--Whitney inequality implies
%         \begin{align*}
%              \prod_{\ell=1}^{k+1}\left|\big(\nabla p_{z_1}(x)~|~ \cdots ~|~\nabla p_{z_{\ell-1}}(x)~|~\nabla p_{z_{\ell+1}}(x)~|~ \cdots ~|~ \nabla p_{z_k}(x)\big)\right|^{1/k}\\
%              \geq \left|\big(\nabla p_{z_1}(x)~|~\nabla p_{z_2}(x)~|~ \cdots ~|~ \nabla p_{z_{k+1}}(x)\big)\right|
%         \end{align*}
%     Therefore, since $\left|\big(\nabla p_{z_1}(x)~|~ \cdots ~|~\nabla p_{z_{\ell-1}}(x)~|~\nabla p_{z_{\ell+1}}(x)~|~ \cdots ~|~ \nabla p_{z_k}(x)\big)\right|\leq 1$ for all $\ell$, then for each $\ell$, we have
%         \begin{align*}
%             \left|\big(\nabla p_{z_1}(x)~|~ \cdots ~|~\nabla p_{z_{\ell-1}}(x)~|~\nabla p_{z_{\ell+1}}(x)~|~ \cdots ~|~ \nabla p_{z_k}(x)\big)\right| \geq c^{k}.
%         \end{align*}
% \end{proof}

% \begin{cor}
%     Let $E \subseteq \R^d$, and let $G \subseteq \R^d$ be countable. Then there exist an integer $k \in \{1, \dots, d-1\}$ such that $E$ is $k$-transversal with respect to $G$, and $E$ is not $ k+1$-transversal with respect to $G$.
% \end{cor}

\begin{cor}\label{cor twocond}
    Let $E \subseteq \R^d$, $\alpha>0$ and let $G \subseteq \R^d$ be countable. Then there exist an integer $k \in \{1, \dots, d\}$,   and $F \subset E$ such that $\pdim F \geq \pdim E-\alpha$, $F$ is $k$-transversal,  and for each $\{x_1, \dots, x_{k+1}\} \subset G$
    \begin{align*}
        \left| \big(\nabla p_{x_1}(x)~|~\nabla p_{x_2}(x)~|~ \cdots ~|~ \nabla p_{x_{k+1}}(x)\big) \right|=0.
    \end{align*}
    for all $x \in F$.
\end{cor}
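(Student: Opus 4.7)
The plan is to iterate downward through $k = d, d-1, \dots, 1$, at each stage invoking Lemma \ref{lem contra} to peel off a subset of full packing dimension on which a new layer of parallelepiped conditions vanishes, and to stop at the largest $k$ for which a sufficiently transversal subset exists. Concretely, I would set $F_{d+1} := E$ and, at stage $k$, ask whether $F_{k+1}$ is $(\alpha, k)$-transversal with respect to $G$: if yes, stop and let $F$ be the witnessing subset from the definition of $(\alpha,k)$-transversality; if no, apply Lemma \ref{lem contra} with $F_{k+1}$ in place of $E$ to produce $F_k \subseteq F_{k+1}$ satisfying $\pdim F_k = \pdim F_{k+1}$ and the vanishing of every $k$-parallelepiped of gradients $\nabla p_{z_j}$ (with $z_j \in G$) on $F_k$.

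Next, I would verify that when the procedure halts at some index $k$, both requirements of the corollary are met. Because $F \subseteq F_{k+1} \subseteq \cdots \subseteq F_d$ by construction, and each $F_j$ was built so that $j$-parallelepipeds vanish identically on it, the vanishing condition at level $k+1$ is inherited by $F$; the case $k = d$ is vacuous under the natural interpretation that any $d+1$ vectors in $\R^d$ are linearly dependent and therefore span a parallelepiped of zero $\mathcal{H}^{d+1}$-measure. For the packing dimension bound, a short induction using Lemma \ref{lem contra} gives $\pdim F_j = \pdim E$ for every $j$ arising in the iteration, so $\pdim F \geq \pdim F_{k+1} - \alpha = \pdim E - \alpha$ by the definition of $(\alpha, k)$-transversality.

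It remains to guarantee termination of the procedure, which I would handle by arguing that $F_2$ is automatically $1$-transversal with respect to $G$ (and hence $(\alpha, 1)$-transversal). By Asplund's Theorem \ref{thm Asplund}, $\nabla \|y\|_*$ has dual norm $1$ for every $y \neq 0$; since all norms on $\R^d$ are equivalent, there is a constant $L = L(d, \|\cdot\|_*) > 0$ such that $|\nabla p_z(x)| \geq L$ in Euclidean norm for every $x \neq z$. Fixing any $z_1 \in G$ and replacing $F_2$ by $F_2 \setminus \{z_1\}$ (which leaves packing dimension unchanged) yields the required uniform lower bound on $|\nabla p_{z_1}(\cdot)|$, forcing the iteration to halt by the time $k = 1$.

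The main obstacle I anticipate is bookkeeping: one must track that the set returned by the stopping rule simultaneously satisfies transversality at the chosen level $k$ and the vanishing of all higher-order parallelepipeds at levels $k+1, \dots, d$, without letting its packing dimension quietly drop. Both conclusions follow from the nested containment $F \subseteq F_{k+1} \subseteq \cdots \subseteq F_d$ together with the observation that the parallelepiped vanishing conditions depend only on pointwise values of the gradients at points of the set, and so are preserved under taking subsets.
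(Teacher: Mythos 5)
Correct, and essentially the paper's argument run in reverse: the paper inducts upward from $k=1$ (using that every set is $1$-transversal because $\|\nabla p_z\|$ is uniformly bounded below) and stops via Lemma \ref{lem contra} at the first failure of $(\alpha/d,k+1)$-transversality, whereas you iterate downward from $k=d$, applying Lemma \ref{lem contra} at each failure and stopping at the first level where $(\alpha,k)$-transversality holds. The ingredients are the same — the dichotomy through Lemma \ref{lem contra}, the gradient lower bound guaranteeing termination at $k=1$, and the automatic degeneracy of any $d+1$ gradients in $\R^d$ — and your bookkeeping, which incurs the $\alpha$ loss only at the final stopping step rather than $\alpha/d$ per stage, is if anything slightly cleaner.
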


\begin{proof}
    For any $x, y \in \R^d$, $\|\nabla p_y(x)\|=1$, thus every set is $(\alpha/d, 1)$-transversal with respect to $G$. By definition, there exists a constant $c_1>0$, $z \in G$ and $F_1\subset E$ (namely $F_1=E$) such that $\pdim F_1 \geq \pdim E-\tfrac{1}{d}$ and  $ \left| \big(\nabla p_{z}(x)\big)\right|\geq c_1$ for all $x \in F_1$. 

    Now $F_1$ is either $(\alpha/d, 2)$-transversal or not. If not, then Lemma \ref{lem contra} implies that there is a set $\tilde{F}_1 \subset F_1$ such that $\pdim \tilde{F}_1 = \pdim F_1 \geq \pdim E-\alpha/d$. The corollary is then proven with $\tilde{F}_1=:F$.
    
    If $F_1$ is $(\alpha/d, 2)$-transversal then there exists $F_2 \subset F_1 \subset E$, $c_2>0$ and $z_1, z_2 \in G$ such that $\pdim F_2 \geq \pdim F_1 -\alpha/d\geq \pdim E-2\alpha/d$ and  $ \left| \big(\nabla p_{z_1}(x)~|~\nabla p_{z_2}(x)\big)\right|\geq c_2$ for all $x \in F_2$. Induction completes the argument.
\end{proof}

\begin{lem}\label{lem geomlem}
    Let $x \in E  \subset \R^d$, $0< \gamma<\delta \ll L < 1$, and $k \in \Z_+$. Let $G \subseteq E$ be a countable, dense subset of $E$.
    Suppose that for each $\{x_1, \dots, x_{k+1}\} \subset G$, 
        \begin{align}\label{eq degen}
           \left| \big(\nabla p_{x_1}(x)~|~\nabla p_{x_2}(x)~|~ \cdots ~|~ \nabla p_{x_{k+1}}(x)\big) \right|=0
        \end{align}
    and there exists $\{z_1, \dots, z_k\} \subseteq G$ such that
        \begin{align}\label{eq strongtranvers}
           \left| \big(\nabla p_{z_1}(x)~|~\nabla p_{z_2}(x)~|~ \cdots ~|~ \nabla p_{z_{k}}(x)\big) \right|\geq L.
        \end{align}
    If $\mathbf{p}=\left(p_{z_i}\right)_{i=1}^k$, then there exists $C=C(L)$ and  $W_{x} \in G(d, k)$ such that 
        \begin{align*}
            \mathbf{p}^{-1}(B(\xi, \gamma)) \cap E \cap B(x, \delta) \subset  N_{C\cdot (\gamma+h(\delta))}(W_{x}^{\perp}+x) \cap X(x, W_{x}, \eta)
        \end{align*}
    where $\xi=\mathbf{p}(x)$.
\end{lem}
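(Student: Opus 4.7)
The plan is to set $W_x := \Span\{\nabla p_{z_i}(x)\}_{i=1}^k$, which has dimension $k$ by \eqref{eq strongtranvers}, and to establish the two inclusions $y \in N_{C(\gamma+h(\delta))}(W_x^\perp + x)$ and $y \in X(x, W_x, \eta)$ separately for each $y$ in the left-hand side. The first relies on the quantitative transversality \eqref{eq strongtranvers} through a Gram-matrix argument, while the second combines the degeneracy \eqref{eq degen} with the density of $G$ in $E$ and Lemma \ref{lem direction}.

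For the neighborhood inclusion, $\mathbf{p}(y) \in B(\xi,\gamma)$ yields $|p_{z_i}(y) - p_{z_i}(x)| \lesssim \gamma$ for every $i$, and since $\|x-y\|_* < \delta$, Lemma \ref{lem gradest} applied to the shifted pair $x - z_i, y - z_i$ gives
\[
    \left| \langle \nabla p_{z_i}(x),\, y - x \rangle \right| \,\lesssim\, \gamma + h(\delta), \qquad i = 1, \dots, k.
\]
Expand $P_{W_x}(y-x) = \sum_i c_i \nabla p_{z_i}(x)$ and pair with each $\nabla p_{z_j}(x) \in W_x$ to obtain a linear system $\mathbf{G}\vec c = \vec b$, where $\mathbf{G}$ is the Gram matrix of $\{\nabla p_{z_i}(x)\}_{i=1}^k$ and $|b_j| \lesssim \gamma + h(\delta)$. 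The parallelepiped bound \eqref{eq strongtranvers} translates to $\det \mathbf{G} \gtrsim L^2$, and the entries of $\mathbf{G}$ are bounded above by norm equivalence; hence Cramer's rule gives $|c_i| \lesssim_L \gamma + h(\delta)$, and thus $\|P_{W_x}(y-x)\| \le C(L)(\gamma + h(\delta))$, which is the first inclusion.

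For the cone inclusion, observe that \eqref{eq degen} applied to $\{z_1, \dots, z_k, \tilde z\}$ forces linear dependence of the corresponding $k+1$ gradients; combined with the linear independence of $\{\nabla p_{z_i}(x)\}_{i=1}^k$, this yields $\nabla p_{\tilde z}(x) \in W_x$ for every $\tilde z \in G$. Assuming $y \neq x$ (else the conclusion is trivial), choose $\tilde z_j \in G$ converging to $y$ by density. Lemma \ref{lem direction} applied to $z = x - \tilde z_j$ then gives
\[
    \| P_{W_x^\perp}(x - \tilde z_j) \| \,\le\, \| P_{\Span(\nabla p_{\tilde z_j}(x))^\perp}(x - \tilde z_j) \| \,<\, \eta\, \|x - \tilde z_j\|,
\]
where the first inequality uses $\Span(\nabla p_{\tilde z_j}(x)) \subseteq W_x$ and hence $W_x^\perp \subseteq \Span(\nabla p_{\tilde z_j}(x))^\perp$. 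Passing to the limit $j \to \infty$ by continuity of projections yields $\|P_{W_x^\perp}(y - x)\| \le \eta \|y - x\|$, which places $y$ in the closed cone $\overline{X(x, W_x, \eta)}$; a marginal increase of $\eta$ (absorbed into a constant depending only on $d$ and $\|\cdot\|_*$) recovers the strict inequality.

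The main technical step is the Gram-matrix inversion quantifying the $L$-dependence in the neighborhood bound, which is the only place where the magnitude $L$ from \eqref{eq strongtranvers} enters. The cone inclusion, by contrast, is a clean density argument that uses the countable dense subset $G \subseteq E$ in an essential way to transfer Lemma \ref{lem direction} from pins in $G$ to an arbitrary target $y \in E$.
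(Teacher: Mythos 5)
Your proposal is correct and follows essentially the same route as the paper: take $W_x = \Span\{\nabla p_{z_i}(x)\}$, use Lemma \ref{lem gradest} plus the transversality bound \eqref{eq strongtranvers} for the neighborhood inclusion, and use \eqref{eq degen}, density of $G$, and Lemma \ref{lem direction} for the cone inclusion. The only differences are that you make explicit the quantitative step the paper merely asserts (the Gram-matrix/Cramer's-rule derivation of $C(L)$) and that you flag the open-cone boundary issue in the limiting step, which the paper passes over silently; both are harmless refinements rather than deviations.
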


\begin{proof}
 Define
    \begin{align*}
        W_{\bar{z}}:= \Span \left\{ \nabla p_{z_1}(x), \nabla p_{z_2}(x), \dots , \nabla p_{z_{k}}(x) \right\}. 
    \end{align*}
By inequality \eqref{eq strongtranvers}, $W_{\bar{z}} \in G(d, k)$. By inequality \eqref{eq degen}, for any $y \in G$, $\nabla p_{y}(x) \in W_{\bar{z}}$.
Therefore, Lemma \ref{lem direction} implies that, for all $y \in G$, 
    \begin{align*}
        y-x \in X(0, W_{\bar{z}}, \eta)
    \end{align*}
and, since $G$ is dense in $E$,
    \begin{align*}
        E \subset X(x, W_{\bar{z}}, \eta ).
    \end{align*}

Let $\xi =\mathbf{p}(x)$. Now
    \begin{align*}
        \mathbf{p}^{-1}(B(\xi, \gamma)) \subset \bigcap_{i=1}^k \left\{y \in \R^d~|~ |\|y-z_i\|_*-\xi_i|<\gamma \right\}.
    \end{align*}
For each $i$, if $y_0 \in \left\{y \in \R^d~|~ |\|y-z_i\|_*-\xi_i|<\gamma \right\}$ and $\|y_0-x\|_*< \delta$, then 
    \begin{align*}
        \big|\|y_0-z_i\|_*- \|x-z_i\|_*\big|< \gamma
    \end{align*}
and by Lemma \ref{lem gradest},
    \begin{align*}
        \big|\|y_0-z_i\|_*- \|x-z_i\|_*\big| + h(\delta)\geq |\langle \nabla \|x-z_i\|_*, y_0-x\rangle|.
    \end{align*}
    Therefore,
    \begin{align*}
        \bigcap_{i=1}^k \left\{y \in \R^d~|~ |\|y-z_i\|_*-\xi_i|<\gamma \right\} \subset \bigcap_{i=1}^k \left\{y\in \R^d~|~ |\langle \nabla p_{z_i}(x), x-y\rangle|<\gamma +h(\delta) \right\}.
    \end{align*}
Now, inequality \eqref{eq strongtranvers} implies that there exists a constant $C=C(L)$ satisfying $C(L) \to \infty$ as $L \to 0$ such that
    \begin{align*}
        \bigcap_{i=1}^k \left\{y\in \R^d~|~ |\langle \nabla p_{z_i}(x), x-y\rangle|<\gamma +h(\delta) \right\} \subset N_{C\cdot (\gamma+h(\delta))}(W_{\bar{z}}^{\perp}+x)
    \end{align*}
This completes the proof with $W_x:=W_{\bar{z}}$.
\end{proof}

\begin{prop}\label{prop:c1weaktrans}
    Let $E \subset B(0, 1)\subset  \R^d$, $\alpha>0$, and $G \subset E$ be a countable, dense subset of $E$. Then there exists  $k \in \{1,\dots, d\}$, $\{z_1,\dots, z_k\} \subset G$, $F \subset E$, and $\delta_0>0$  such that $\pdim E -\alpha \leq\pdim F$ and for every $\delta \in (0, \delta_0)$ and $\xi \in \R^d$, there exist $m$ points $x_1,\dots,x_m \in \R^d$ such that $m =m(\|\cdot\|_*, d)$ and
    \begin{equation} \label{eq:sep}
        \mathbf{p}^{-1}\big( B(\xi,\delta) \big) \cap F \subseteq \bigcup_{n=1}^m B\big( x_n, \delta \big)
    \end{equation}
    where $\mathbf{p}=\left(p_{z_i}\right)_{i=1}^k$.
\end{prop}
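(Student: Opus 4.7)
The plan is to first apply Corollary \ref{cor twocond} to obtain a ``good'' subset $F \subseteq E$ with $\pdim F \geq \pdim E - \alpha$ that enjoys simultaneous $k$-transversality (via some $\{z_1,\dots,z_k\} \subset G$ with parallelepiped volume at least $L$) and full degeneracy of every $(k+1)$-tuple of gradients. Set $\mathbf{p} := (p_{z_i})_{i=1}^k$ and $W_x := \Span\{\nabla p_{z_\ell}(x)\}_{\ell=1}^k \in G(d,k)$ for each $x \in F$. Combining the degeneracy---which forces $\nabla p_y(x) \in W_x$ for every $y \in G$, and hence $\Span(\nabla p_y(x)) \subseteq W_x$---with Lemma \ref{lem direction} and the density of $G$ in $E$, one obtains the cone containment $F \subseteq X(x, W_x, \eta)$ for every $x \in F$, where $\eta = \eta(d, \|\cdot\|_*)$ is Lemma \ref{lem direction}'s constant.

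The heart of the argument is a self-improving pairwise-distance bound on the set $S := \mathbf{p}^{-1}(B(\xi, \delta)) \cap F$. For any $y, y' \in S$, Lemma \ref{lem gradest} applied to each $p_{z_\ell}$ yields $|\langle \nabla p_{z_\ell}(y), y - y'\rangle| \leq 2\delta + h(\|y - y'\|_*)$. Since $\{\nabla p_{z_\ell}(y)\}_\ell$ spans $W_y$ with parallelepiped volume at least $L$, inverting the Gram matrix gives $\|P_{W_y}(y - y')\| \leq C_L\bigl(2\delta + h(\|y - y'\|_*)\bigr)$ for some $C_L = C_L(L, k, d, \|\cdot\|_*)$. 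Combined with the cone bound $\|P_{W_y^\perp}(y - y')\| \leq \eta \|y - y'\|$, this produces the implicit inequality $\|y - y'\| \leq C_1\bigl(\delta + h(\|y - y'\|_*)\bigr)$ with $C_1 := C_L/\sqrt{1 - \eta^2}$.

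To convert this into a uniform cover, first pass to the monotonic envelope of $h$ so that $h$ may be assumed nondecreasing, then choose $\delta_0 > 0$ small enough that $C_1 h(C' r) \leq r/2$ for all $r \leq \delta_0$, where $C'$ bounds $\|\cdot\|_*$ in terms of the Euclidean norm; this is possible because $h(r)/r \to 0$. Cover $F \subset B(0,1)$ by $N_0 \lesssim \delta_0^{-d}$ balls $\{B(q_j, \delta_0/2)\}_{j=1}^{N_0}$. In each such ball that meets $S$, pick $y_j \in S \cap B(q_j, \delta_0/2)$; then for every other $y' \in S \cap B(q_j, \delta_0/2)$, one has $\|y_j - y'\| \leq \delta_0$, so substituting into the implicit inequality and using the choice of $\delta_0$ yields $\|y_j - y'\| \leq 2 C_1 \delta$. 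Thus $S \cap B(q_j, \delta_0/2) \subset B(y_j, 2C_1 \delta)$, which is covered by $\lesssim (2C_1)^d$ balls of radius $\delta$. The total $m \lesssim N_0 \cdot (2C_1)^d$ depends only on the norm and dimension, as required.

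The main obstacle is precisely this self-improvement step, which converts $\|y - y'\| \leq C_1\bigl(\delta + h(\|y-y'\|_*)\bigr)$ into $\|y - y'\| \lesssim \delta$. A direct application of Lemma \ref{lem geomlem} on a fixed ball $B(x, \delta_0)$ only produces a cover by balls of radius $\sim \delta + h(\delta_0)$, which does not shrink with $\delta$; the improvement essentially relies on the fact that \emph{both} $y$ and $y'$ lie in $S$, so that the error term $h$ is evaluated at $\|y - y'\|$ rather than at the fixed local scale $\delta_0$, at which point the iteration absorbs it.
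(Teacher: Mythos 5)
Your proof is correct, and it uses the same ingredients as the paper---Corollary \ref{cor twocond}, the cone containment obtained from the degeneracy of $(k+1)$-tuples together with Lemma \ref{lem direction} and density of $G$, the gradient estimate of Lemma \ref{lem gradest}, and the Gram/dual-basis bound converting $k$-transversality into control of $\|P_{W_y}(y-y')\|$, followed by a $\delta_0^{-d}$-ball cover of $B(0,1)$---but it organizes the key quantitative step differently. The paper proves Lemma \ref{lem geomlem}, a containment of $\mathbf{p}^{-1}(B(\xi,\gamma))\cap F\cap B(x,\delta)$ in a slab of width $C(\gamma+h(\delta))$ with the error evaluated at the localization scale $\delta$, and then runs a two-scale recentering argument tying $\gamma$ to $\delta$ (and then to $\delta_0$). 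You instead prove a pairwise estimate inside the fiber $S=\mathbf{p}^{-1}(B(\xi,\delta))\cap F$ with $h$ evaluated at the actual separation $\|y-y'\|_*$, and absorb it via the implicit inequality $\|y-y'\|\le C_1(\delta+h(\|y-y'\|_*))$ after shrinking $\delta_0$. This buys a genuine simplification: as you note, applying Lemma \ref{lem geomlem} at the fixed scale $\delta_0$ only gives radius $\sim \delta+h(\delta_0)$, which does not shrink with $\delta$, and the paper's final covering step must juggle two scales to avoid exactly this (as written, the application of \eqref{eq stabletransverse} to $\mathbf{p}^{-1}(B(\xi_\ell,100\gamma))\cap F\cap B(x_\ell,\delta_0)$ implicitly requires the localization radius to shrink with $\gamma$, which is where an iteration down the scales---or your one-step absorption---is really doing the work). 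Two minor caveats, both shared with the paper: your bound $m\lesssim \delta_0^{-d}C_1^{d}$ depends on $L$ and $\delta_0$, hence on $E$, $G$, $\alpha$, and not only on $(\|\cdot\|_*,d)$, which is harmless since only independence of $\delta$ and $\xi$ is used for weak transversality; and you invoke Lemma \ref{lem gradest} uniformly in the base point (including $y$ near a pin $z_\ell$), exactly as the paper does in Lemma \ref{lem geomlem}.
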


\begin{proof}
Corollary \ref{cor twocond} implies that  there exist an integer $k \in \{1, \dots, d\}$,   and $F \subset E$ such that $\pdim F \geq \pdim E -\alpha$, $F$ is $k$-transversal,  and for each $\{x_1, \dots, x_{k+1}\} \subset G$
\begin{align*}
    \left| \big(\nabla p_{x_1}(x)~|~\nabla p_{x_2}(x)~|~ \cdots ~|~ \nabla p_{x_{k+1}}(x)\big) \right|=0
\end{align*}
for all $x \in F$. Since $F$ is $k$-transversal with respect to $G$, there exist $L \in (0,1)$, $\{z_1,\dots,z_k\} \subseteq G$, such that
\begin{align*}
    \left|\big(\nabla p_{z_1}(x)~|~\nabla p_{z_2}(x)~|~ \cdots ~|~ \nabla p_{z_k}(x)\big)\right|\geq L
\end{align*}
for all $x \in F$.
  
Let $x \in F$ and  $\xi=\mathbf{p}(x)$. Lemma \ref{lem geomlem} implies  that there exists $W_x \in G(d, k)$ and $C=C(L)$ such that 
\begin{align*}
    \mathbf{p}^{-1}(B(\xi, \gamma)) \cap F \cap B(x, \delta) \subset  N_{C\cdot (\gamma+h(\delta))}(W_{x}^{\perp}+x) \cap X(x, W_{x}, \eta).
\end{align*}
Recall, $\Lambda =\Lambda( \|\cdot\|_*, d)$ for the proof of Lemma \ref{lem direction}. Let $\delta_0>0$ be small enough so that $h(\delta) \leq \frac{\Lambda^2\sqrt{1-\eta^2}}{100 C}\delta$ for all $\delta\leq \delta_0$ and let $\gamma \leq \frac{\Lambda^2\sqrt{1-\eta^2}}{100C}\delta$.  

 If $y \in N_{C\cdot (\gamma+h(\delta))}(W_{x}^{\perp}+x)$,  then 
 \begin{align*}
     \|P_{W_x}(y-x)\| \leq \Lambda^{-1} \|P_{W_x}(y-x)\|_* \leq \Lambda^{-1}C\cdot (\gamma+h(\delta)).
 \end{align*} 
 Also, if $y \in X(x, W_{x}, \eta)$, then $\|P_{W_x^{\perp}}(y-x)\|< \eta\|y-x\|$ which implies $(1-\eta^2)^{1/2}\|x-y\| < \|P_{W_x}(y-x)\|$. Thus if $y \in N_{C\cdot (\gamma+h(\delta))}(W_{x}^{\perp}+x) \cap X(x, W_{x}, \eta)$, then 
 \begin{align*}
     \|x-y\|_* \leq \Lambda^{-1}\|x-y\| < \Lambda^{-2}C\cdot (\gamma+h(\delta)) \cdot (1-\eta^2)^{-1/2}.
 \end{align*}
 Therefore,
\begin{align*}
    N_{C\cdot (\gamma+h(\delta))}(W_{x}^{\perp}+x) \cap X(x, W_{x}, \eta) \subset B\!\left(x, C\cdot \tfrac{(\gamma+h(\delta))}{\Lambda^2\sqrt{1-\eta^2}}\right)\subset B(x, \tfrac{1}{50}\delta).
\end{align*}
This further implies that for $\gamma = \frac{\Lambda^{2}\sqrt{1-\eta^2}}{100C}\delta$
\begin{align}\label{eq stabletransverse}
    \mathbf{p}^{-1}(B(\xi, \gamma)) \cap F \cap B(x, \delta) \subset B(x, \tfrac{1}{50}\delta) \subset \bigcup_{i}B(y_i, \gamma)
\end{align}
for at most approximately $(2C/\Lambda^2\sqrt{1-\eta^2})^d$ balls of the form $B(y_i, \gamma)$.  

% Moreover, for $\gamma \leq\frac{1}{50}(\frac{\Lambda^{2}\sqrt{1-\eta^2}}{100C})\delta$
% \begin{align*}
%     \mathbf{p}^{-1}(B(\xi, \gamma)) \cap F \cap B(x, \delta) \subset \mathbf{p}^{-1}(B(\xi, \gamma)) \cap F \cap B(x, \tfrac{1}{50}\delta) \subset B(x, (\tfrac{1}{50})^2\delta).
% \end{align*}

Now let $\gamma \leq \frac{1}{100}\frac{\Lambda^2\sqrt{1-\eta^2}}{100C}\delta_0$ and cover $F$ with approximately $\delta_0^{-d}$ balls, $\{B_{\ell}\}$ of radius $\frac{1}{100}\delta_0$. If  $\mathbf{p}^{-1}(B(\xi, \gamma)) \cap F \cap B_{\ell} \neq \emptyset$, let $x_{\ell}\in \mathbf{p}^{-1}(B(\xi, \gamma)) \cap F \cap B_{\ell}$ and $\xi_{\ell}= \mathbf{p}(x_{\ell})$. Then triangle inequality implies
\begin{align*}
    \mathbf{p}^{-1}(B(\xi, \gamma)) \cap F \cap B_{\ell} \subset \mathbf{p}^{-1}(B(\xi_{\ell}, 100\gamma)) \cap F \cap B(x_{\ell}, \delta_0)
\end{align*}
Now \eqref{eq stabletransverse} implies that for each $\ell$, there is a collection, $\{B_{i}^{\ell}\}$, of approximately $\Big( 100\frac{2C}{\Lambda^2\sqrt{1-\eta^2}} \Big)^d$ balls of radius $\gamma$ such that 
\begin{align*}
    \mathbf{p}^{-1}(B(\xi_{\ell}, 100\gamma)) \cap F \cap B(x_{\ell}, \delta_0)\subset  \bigcup_{i}B_i^{\ell}
\end{align*}
which implies
\begin{align*}
     \mathbf{p}^{-1}(B(\xi, \gamma)) \cap F &= \bigcup_{\ell} \mathbf{p}^{-1}(B(\xi, \gamma)) \cap F \cap B_{\ell}\\
     &= \bigcup_{\ell}\mathbf{p}^{-1}(B(\xi_{\ell}, 100\gamma)) \cap F \cap B(x_{\ell}, \delta_0)\subset \bigcup_{\ell} \bigcup_{i}B_i^{\ell}
 \end{align*}

Thus inequality \eqref{eq:sep} is established with
\begin{equation*}
    m \sim \delta_0^{-d}\left(\frac{200C}{\Lambda^2\sqrt{1-\eta^2}}\right)^d. \qedhere
\end{equation*}    
\end{proof}

%%%%%%%%%%%%%%%%%%%%%%%%%%%%%%%%%%%%
%%% Section 6: Proof of C^1 Case %%%
%%%%%%%%%%%%%%%%%%%%%%%%%%%%%%%%%%%%

\section{Proof of Theorem \ref{thm:main} -- The \texorpdfstring{$C^1$}{C1} case} \label{s:C1-proof}

Let $E \subset \R^d$, let $\varepsilon>0$, and let $\|\cdot\|_*$ be a $C^1$ norm on $\R^d$. Let $s=\pdim E$ and without loss of generality, let $E \subset B(0, 1)$.
    
Proposition \ref{prop:c1weaktrans} implies that there exists $k \in \{1,\dots, d\},$ $F \subset E$ and $\{z_1,\dots, z_k \} \subset E$ such that $\pdim F \geq \pdim E - \eps$ and if $\mathbf{p}_i: F \to \R$ is defined by
\begin{align*}
    \mathbf{p}_i(x):= \|x-z_i\|_*
\end{align*}
for all $i \in \{1, \dots, k\}$, then $\mathbf{p}:=(\mathbf{p}_i)_{i=1}^k$ is weakly transversal. Therefore, Proposition \ref{prop:nonlinear-jarvenpaa} implies 
\begin{align*}
    \frac{1}{k} \pdim F \leq  \max_{1 \leq i \leq k} \pdim \mathbf{p}_{i}(F).
\end{align*}
Since $\mathbf{p}_{i}(F)= \Delta_{z_i}^*(F)$, 
\begin{equation*}
    \frac{1}{d} \pdim E -\eps\leq \frac{1}{k} \pdim E -\eps\leq\frac{1}{k} \pdim F \leq  \max_{1 \leq i \leq k} \pdim \Delta_{z_i}^*(F)\leq  \max_{1 \leq i \leq k} \pdim \Delta_{z_i}^*(E). \QED
\end{equation*}

%%%%%%%%%%%%%%%%%%%%%%%%%%%%%%%%
%%% Section 7: Sharp Example %%%
%%%%%%%%%%%%%%%%%%%%%%%%%%%%%%%%

\section{Proof of Theorem \ref{thm:sharp}} \label{s:sharp}

\noindent Let $\| x\|_P= \max \, \{ |x \cdot v^\ell| \}_{\ell=1}^N$ be a polynomial norm such that 
    \begin{align*}
        \{v^1, \dots, v^N \} \subset \Q^d.
    \end{align*}
Then there exists $q \in \Z^+$  such that  the following holds: for each $v^j= (v^j_1, \dots,v^j_d) \in \{v^1, \dots, v^N \}$ and for each $\ell \in \{1,\dots, d\}$, there is a  $p^j_{\ell} \in \{0, \dots, q-1\}$ such that
    \begin{align*}
        v_{\ell}^j = \frac{p^j_{\ell}}{q}.
    \end{align*} 
Consider two sequences of integers $\{m_k\}_{k=1}^{\infty}$ and $\{M_k\}_{k=1}^{\infty}$ such that $M_k-m_k\geq 2^k$ for all $k$ and 
    \begin{align*}
        \limsup_{N \to \infty} \frac{\#\left([0, N] \cap \bigcup_{k=1}^{\infty}[m_k, M_k]\right)}{N} = \frac{s}{d}.
    \end{align*}
Let $A := \bigcup_{k=1}^{\infty}[m_k, M_k]$ and define $F \subset \R$ to be the compact set
\begin{align*}
    F := \left\{ x = \sum_{m=1}^{\infty} \frac{x_m}{q^m}~:~ x_m \in \{0, 1,\dots, q-1\}\, \forall m \mbox{ and } x_m=0 \mbox{ if } m \not\in A  \right\}.
\end{align*}
Then $\pdim F = \tfrac{s}{d}$, so if we let
\begin{equation*}
    E := \underbrace{F \times F \times \cdots \times F}_{d \text{ times }} \subset \R^d,
\end{equation*}
then we have $\pdim E = s$.
% Observe that for fixed $k_0,$
%     \begin{align*}
%           \frac{\#([0, M]+[m_{k_0}, M_{k_0}])  \cap \Z)}{N}\leq \frac{(M_{k_0}-m_{k_0})+M}{N}
%     \end{align*}
Now for every $j \in \{ 1, \dots, N\}$,
    \begin{align*}
       & \left\{ (x-y)\cdot v^j~:~ x, y \in E\right\} \\
        &\hspace{.1cm}= \bigcup_{\ell=1}^d \left\{  \sum_{m=1}^{\infty} \frac{(x_m-y_m)p^j_{\ell}}{q^{m+1}} : x_m, y_m \in \{0, 1,\dots, q-1\} \ \forall \+ m \mbox{ and } x_m=0 \mbox{ if } m \not\in A\right\}\\
        &\hspace{.1cm}\subseteq \left\{\sum_{m=1}^{\infty} \frac{x_m}{q^m} : x_m \in \{ -q+1,\dots, q-1\} \ \forall \+ m \mbox{ and } x_m=0 \mbox{ if } m \not\in \bigcup_{k=1}^{\infty}[m_k, M_k+2+\log d] \right\}.
    \end{align*}
Therefore, 
\begin{align*}
    \pdim  \left\{ |(x-y)\cdot v_{\ell}^j|~:~ x \in E\right\} \leq \limsup_{N \to \infty} \frac{\#\left([0, N] \cap \bigcup_{k=1}^{\infty}[m_k, M_k+2 +\log d]\right)}{N} = \frac{s}{d}
\end{align*}
and thus
\begin{equation*}
    \pdim \Delta^*(E) \leq \frac{s}{d}. \QED
\end{equation*}

We do not know whether the hypothesis of rational dependence in Theorem \ref{thm:sharp} can be omitted. Schematically, producing an example of the sort constructed above without this dependence between the faces would require solving an overdetermined system of linear equations. As such, a hypothetical example could not rely on this ``prescribed projection" approach. Such constructions are common in the Hausdorff dimension regime and were previously used in \cite{altaf2023distance}, but prescribing projections at \textit{all} scales rather than simply a sequence of scales is fundamentally more challenging.

\vspace*{-0.03cm}

\phantomsection
\section*{Acknowledgment}

\noindent We would like to thank Krystal Taylor, Eyvindur Palsson, Alexia Yavicoli, Ben Jaye, and Malabika Pramanik for organizing and hosting the 2024 workshop \textit{On the Interface of Geometric Measure Theory and Harmonic Analysis} at the Banff International Research Station. In addition, we would like to thank all the participants for contributing to the enriching and community-building experience.

\vspace*{-0.03cm}

%%%%%%%%%%%%%%%%%%
%%% References %%%
%%%%%%%%%%%%%%%%%%

\bibliographystyle{plain}
\bibliography{references}

\end{document}